\newcommand{\F}{\mbox{${\mathcal F}$}}
\newcommand{\E}{\mbox{${\mathcal E}$}}
\newcommand{\g}{\mbox{${\mathfrak g}$}}
\newcommand{\h}{\mbox{${\mathfrak h}$}}
\newcommand{\kf}{\mbox{${\mathfrak k}$}}
\newcommand{\p}{\mbox{${\mathfrak p}$}}
\newcommand{\q}{\mbox{${\mathfrak q}$}}
\newcommand{\s}{\mbox{${\mathfrak s}$}}
\newcommand{\HH}{\mbox{${\mathbb H}$}}
\newcommand{\I}{\mbox{${\mathbb I}$}}
\newcommand{\PP}{\mbox{${\mathbb P}$}}
\newcommand{\R}{\mbox{${\mathbb R}$}}
\newcommand{\Ha}{\mbox{${\mathcal H}$}}
\newcommand{\Ly}{\mbox{${\mathcal L}$}}
\newcommand{\Qa}{\mbox{${\mathcal Q}$}}
\newcommand{\tr}{{\rm tr}}
\newcommand{\ric}{{\rm Ric}}
\newcommand{\eop}{\mbox{$\Box$}}
\def\numberwithin#1#2{\@ifundefined{c@#1}{\@nocnterrr}{%
  \@ifundefined{c@#2}{\@nocnterr}{%
  \@addtoreset{#1}{#2}%
  \toks@\expandafter\expandafter\expandafter{\csname the#1\endcsname}%
  \expandafter\xdef\csname the#1\endcsname
    {\expandafter\noexpand\csname the#2\endcsname
     .\the\toks@}}}}
\numberwithin{equation}{section}
\newtheorem{thm}[equation]{Theorem}
\newtheorem{prop}[equation]{Proposition}
\newtheorem{ex}[equation]{Example}
\newenvironment{example}{\begin{ex} \em}{\end{ex}}
\newtheorem{rem}[equation]{Remark}
\newenvironment{rmk}{\begin{rem} \em}{\end{rem}}
\begin{document}

\title{Cohomogeneity One Shrinking Ricci Solitons: \\ An Analytic and
     Numerical Study}
\author{Andrew S. Dancer}
\address{Jesus College, Oxford University, OX1 3DW, United Kingdom}
\email{dancer@maths.ox.ac.uk}
\author{Stuart J. Hall}
\address{Department of Mathematics, Imperial College, London, SW7 2AZ,
United Kingdom}
\email{stuart.hall06@imperial.ac.uk}
\author{McKenzie Y. Wang}
\address{Department of Mathematics and Statistics, McMaster
University, Hamilton, Ontario, L8S 4K1, Canada}
\email{wang@mcmaster.ca}

\date{revised \today}

\begin{abstract}
  We use analytical and numerical methods to investigate the equations
  for cohomogeneity one shrinking gradient Ricci solitons. We show the
  existence of a winding number for this system around the subvariety of phase space
  corresponding to Einstein solutions and obtain some estimates for
  it. We prove a non-existence result for certain orbit types,
  analogous to that of B\"ohm in the Einstein case. We also carry out
numerical investigations for selected orbit types.
\end{abstract}

\maketitle

\bigskip
\setcounter{section}{-1}

\section{\bf Introduction}

A Ricci soliton consists of a complete Riemannian metric $g$ and a
complete vector field $X$ on a manifold which satisfy the equation:
\begin{equation} \label{LieRS}
{\ric}(g) + \frac{1}{2} \,{\sf L}_{X} g + \frac{\epsilon}{2} \, g = 0
\end{equation}
where $\epsilon$ is a real constant and $\sf L$ denotes the Lie derivative.
The soliton is called {\em steady} if $\epsilon =0$, {\em expanding}
if $\epsilon > 0,$ and {\em shrinking} if $\epsilon < 0$.
If $X$ is a Killing vector field, the metric is actually Einstein,
and the soliton is called {\em trivial}: we shall
exclude these in what follows. For general background, see \cite{Cetc}
and \cite{Ca2}.

As well as being natural generalisations of Einstein
metrics, Ricci solitons are of interest as generating
particularly simple solutions to the Ricci flow, for they evolve
under this flow just by its natural symmetries, that is, by diffeomorphisms
and homothetic rescalings. Ricci solitons are of crucial importance
in analysing, via dilations, singularities of the flow \cite{Pe}.
The Ricci flow is also of physical interest
as an approximation to the renormalisation group flow for nonlinear sigma-models
\cite{F}. Indeed one of the earliest nontrivial Ricci solitons,
the Hamilton cigar \cite{Ha}, was also discovered in a physical context
by Witten \cite{Wi}.

Most known examples of Ricci solitons are of {\em gradient type}, that is,
we have $X = {\rm grad} \; u$ for a smooth function $u$. Equation (\ref{LieRS})
then becomes
\begin{equation} \label{gradRS}
{\ric}(g) + {\rm Hess}(u) + \frac{\epsilon}{2} \, g = 0.
\end{equation}
We shall use the abbreviation GRS for gradient Ricci solitons.
In this case, the completeness of ${\rm grad}\ u$ actually follows from the
completeness of $g$, by the recent work of Z. Zhang \cite{Zh}.

Noncompact complete solitons may be steady, expanding or shrinking, but
nontrivial compact solitons must be shrinking and of gradient type \cite{Pe}.
In all cases K\"ahler examples have been constructed (see \cite{Koi}, \cite{Ca1},
\cite{ACGT}, \cite{FIK}, \cite{DW1}, \cite{FuW} for example, among many other authors).
In the expanding and steady cases  non-K\"ahler examples were
constructed in \cite{DW2} , \cite{DW3}, generalising
examples of Bryant \cite{Bry}, Ivey \cite{Iv}, Gastel and Kronz \cite{GK}.
There are also many homogeneous non-K\"ahler expanders (necessarily of
non-gradient type), constructed by Lauret \cite{La} and others.

In contrast, so far the only known complete nontrivial shrinking solitons
are K\"ahler, or else are rigid (i.e. obtained from a product of Einstein
spaces and a Gaussian soliton \cite{PW}).
In this paper we begin a study of shrinking solitons which are not necessarily
of K\"ahler type. We shall focus on solitons of cohomogeneity
one, when the existence of a symmetry group with hypersurface principal orbits
(or a suitable bundle ansatz) reduces the equations to a system of ordinary
differential equations. This is a natural place to start, as  homogeneous
compact solitons are trivial. Moreover, the great majority of known complete
nontrivial solitons are either homogeneous (in the case of Lauret's noncompact
expanders) or obtained by a
 cohomogeneity one/bundle-type construction.
(The main exceptions we are aware of are the
existence result \cite{WZh} for K\"ahler-Ricci solitons on toric varieties,
and the generalisation \cite{PS} to Fano toric bundles over a
generalised flag variety).

In \S 1,2 we show that the general cohomogeneity one shrinking soliton
equations yield a winding number, representing the winding of the flow
around the subvariety of phase space corresponding to Einstein
solutions. We obtain some estimates for the winding. We also introduce
a functional which is monotone in a certain region of phase space. In
\S 3 we prove a nonexistence result for compact cohomogeneity one
solitons with certain orbit types, analogous to a result of B\"ohm in
the Einstein case \cite{B3}. In \S 4  we focus on solitons of
multiple warped product type. In \S 5 we report on numerical
investigations for some orbit types, including those where solutions
were found in the Einstein case \cite{B1}. The numerics indicate that
the shrinking soliton equations on compact spaces exhibit a high degree of
rigidity; in particular we discuss several spaces where Einstein
metrics exist \cite{B1} but where numerical searches did not produce any evidence of nontrivial solitons.

\medskip
{\em Acknowledgements:} The second author was supported by an EPSRC
doctoral grant during the period when some of this research was carried out.
He would like to thank his PhD supervisor, Prof. Simon Donaldson, for his
encouragement. The third author is partially supported by NSERC Grant
No. OPG0009421

\section{\bf Cohomogeneity one Ricci soliton equations and a general winding number}

Let us consider a cohomogeneity one gradient Ricci soliton:
this means that the soliton is invariant under the action of
a Lie group $G$ with generic orbit (called the principal orbit type)
of real codimension one. We write the principal
orbit as $G/K$, where $G$ is a compact Lie group and $K$ is a closed subgroup.
We refer to \cite{DW1} for background on the soliton equations in the cohomogeneity
one setting.

We denote by $n$ the dimension of $G/K$, so the cohomogeneity one manifold $M$
has dimension $n+1$. The open and dense subset $M_0$ of the manifold consisting of
the principal orbits may be written as $I \times (G/K)$, where
$I$ is an open interval in $\mathbb R$ and  the product structure results from
choosing a geodesic which intersects all generic orbits orthogonally.
On $M_0$ we may then write the soliton potential as $u(t)$ and the metric as
\begin{equation} \label{metric}
\bar{g}:=dt^2 + g_t
\end{equation}
where, for each $t$, $g_t$ is a $G$-invariant metric on $G/K$.
Letting $\p$ be an Ad$_{K}$-invariant complement to $\kf$ in $\g$, we know
that $G$-invariant metrics on $G/K$ correspond to Ad$_{K}$-invariant
inner products on $\p$.

The cohomogeneity one gradient Ricci soliton equations on $I \times (G/K)$
are now
\begin{eqnarray}
r_t -\dot{L} + (\dot{u} - {\rm tr} L) L + \frac{\epsilon}{2} \I &=& 0
\label{soleqS}, \\
-{\rm tr} (\dot{L}) - {\rm tr} (L^2) + \ddot{u} + \frac{\epsilon}{2}&=& 0,
\label{soleqN} \\
d({\tr L}) + \delta^{\nabla} L &=& 0,
\label{soleq-mixed}
\end{eqnarray}
where $L$ denotes the shape operator of the hypersurface orbits, viewed as an endomorphism
and related to the metric by $\dot{g_t} = 2g_t L$. Also $r_t$ denotes the
Ricci endomorphism, and $\delta^{\nabla}$ is the codifferential for
$T^*(G/K)$-valued $1$-forms.
Note that the first equation represents the components of the equations
tangent to $G/K$, the second is the equation in the normal direction, and
the last equation represents the mixed directions.

The cohomogeneity one manifold can have up to two special orbits, corresponding
to the endpoints of the interval $I$. In this paper we will assume that there is
at least one special orbit, placed at $t=0$. We may then assume that
$I=(0, T)$ where $T$ is an extended positive real number. If $T< +\infty$, $M$ is
compact and additional conditions must be imposed at $0$ and $T$ to ensure that
the metric $\bar{g}$ and potential $u$ extend smoothly to all of $M$. When there
is only one special orbit, the cohomogeneity one manifold is non-compact, and
there are further conditions which ensure that $\bar{g}$ is complete.
(See \cite{EW} and \cite{DW1} for more details of the smoothness conditions.)
Here we just like to highlight one result from \cite{DW1} (cf Prop 3.19) which has
practical consequences for the construction of gradient Ricci solitons.

Assume that there is a special orbit of dimension strictly smaller
than $n$. Then if (\ref{soleqS}) holds for a sufficiently smooth ($C^3$) metric and
potential, then (\ref{soleq-mixed}) automatically holds. Furthermore, if
the well-known {\em conservation law}
\begin{equation} \label{cons1}
\ddot{u} +  ( - \dot{u} +{\rm tr}\, L )\dot{u} -\epsilon u = C
\end{equation}
holds for some fixed constant $C$, then (\ref{soleqN}) also holds. In other words,
in constructing cohomogeneity one gradient Ricci solitons, it is enough to produce a
solution to (\ref{soleqS}) satisfying the conservation law which is smooth up to order $3$.
Note that in \cite{EW} examples of principal orbit and special orbits are given
for which the smoothness conditions for $\bar{g}$ have to be checked to arbitrarily high
orders.

\bigskip
Using (\ref{soleqN}) and the trace of (\ref{soleqS}), the conservation law above
can be written in the form
\begin{equation} \label{ham}
S + \tr(L^2)- (-\dot{u}+ \tr \,L)^2 + (n-1)\frac{\epsilon}{2} = C + \epsilon u,
\end{equation}
where $S$ is the scalar curvature of the principal orbits and the constant $C$
is the same one as in (\ref{cons1}). We note that this form of the conservation
law may be viewed as the zero energy condition for a Hamiltonian constructed out of
Perelman's energy and entropy functionals.

\bigskip
Two quantities appearing in (\ref{ham}) are of interest to us.

The first is
\begin{equation} \label{xi}
\xi:=-\dot{u} + \tr \,L,
\end{equation}
which is the analogue of the mean curvature $\tr L$ of the principal orbits
for the dilaton volume element $ e^{-u} d\mu_g$.  By (\ref{soleqN}), $\xi$ is
strictly decreasing in the shrinking case ($\epsilon < 0$). Indeed we have
the useful inequality
\begin{equation} \label{xidot}
\dot{\xi} \leq \frac{\epsilon}{2} < 0.
\end{equation}
Furthermore, when there is a special orbit with codimension $k+1 \geq 2$ placed
at $t=0$, the smoothness conditions imply that $\dot{u}(0)=0$ and
${\tr \,L} = \frac{k}{t} + O(t)$ as $t \rightarrow 0$. In the complete, noncompact
case, the two facts above show that $\xi$ tends to $+\infty$ as $t$ tends to $0$ and
$\xi$ tends to $-\infty$ as $t$ tends to $+\infty$. In the compact case, an analogous
calculation to that above shows that $\xi$ tends to $-\infty$ as we approach the
second special orbit at $t=T$. Hence in the shrinking case, if the metric is
complete, $\xi$ must decrease  from $+\infty$ to $-\infty$ regardless
of whether $M$ is compact or non-compact.

Denote by $t_*$ the unique zero of $\xi$,
and introduce, as in \cite{DW2}, \cite{DW3}, the variable
\begin{equation}  \label{def-W}
W := \xi^{-1} = \frac{1}{-\dot{u} + {\tr L}}
\end{equation}
as well as a new independent variable $s$ defined by
\begin{equation} \label{st}
\frac{d}{ds} := W \ \frac{d}{dt} =
\frac{1}{(-\dot{u} + {\rm tr} L)} \ \frac{d}{dt}.
\end{equation}
We shall use a prime ${ }^{\prime}$ to denote differentiation with respect to $s$.
It is natural to let $s=-\infty$ correspond to $t=0$. The zero $t_*$ of $\xi$
corresponds to the blow-up time of $W$, which occurs at a {\em finite} value $s_*$ of $s$.
To see this, note that equation (\ref{soleqN}) can be rewritten as
$$\dot{\xi} = -\tr(L^2) + \frac{\epsilon}{2},$$
or in terms of $W$ as
\begin{equation} \label{Weqn}
      W^{\prime} = W^3 \left(\tr(L^2) - \frac{\epsilon}{2}\right).
\end{equation}
Since $\epsilon < 0$, we have $W^{\prime} \geq -\frac{\epsilon}{2} W^3$, which shows
that $W$ blows up in finite time.

We shall call a point where $W$ blows up (or equivalently, where the generalised
mean curvature $\xi$ vanishes), a {\em turning point}. For the special case of
Einstein trajectories, where $u$ is constant, this reduces to B\"ohm's notion of
turning point, that is, a point where the hypersurface becomes minimal \cite{B1}.
Note however that a turning point is not a singularity of the metric, just a
coordinate singularity. One might hope to construct compact shrinkers by glueing
together two trajectories at the turning point if the right matching conditions held.

\medskip

The second interesting quantity is
\begin{equation} \label{defE}
{\mathcal E} = C + \epsilon u,
\end{equation}
which we view both as a smooth function on $M$ and on the interval (orbit space) $I$.
Since one may add an arbitrary constant to the soliton potential, (\ref{cons1})
shows that this is compensated by a corresponding change in the constant $C$
appearing in the conservation law. This is one of the reasons for interest in the
quantity $\E$ rather than the potential. It is then natural to introduce
also ${\mathcal F} := \dot{u}$,
as $u$ is defined up to an arbitrary constant. Note that (\ref{cons1}) may now be
written as
\begin{equation} \label{eqnE}
\ddot{\mathcal E} + \xi \dot{\mathcal E} - \epsilon {\mathcal E} = 0.
\end{equation}

We now switch to derivatives with respect to $s$. We have
$$ {\mathcal E}^{\prime} = \epsilon u^{\prime} = \epsilon W {\mathcal F}.$$
Differentiating $\mathcal F$ we obtain
\begin{eqnarray*}
 {\mathcal F}^{\prime} &=& W\ \ddot{u} \\
                &=& W\left(C+ \epsilon u - \frac{\dot{u}}{W}\right) \\
                &=& W\left({\mathcal E} - \frac{\mathcal F}{W}\right)
\end{eqnarray*}
where we used (\ref{cons1}) to get the second equality above. In other words, we have
the associated system
\begin{equation} \label{EF}
\left( \begin{array}{c}
{\mathcal E} \\
{\mathcal F}
\end{array} \right)^{\prime} =
\left( \begin{array}{cc}
0 & \epsilon W \\
W & -1
\end{array} \right)
\left(\begin{array}{c}
{\mathcal E} \\
{\mathcal F}
\end{array}
\right).
\end{equation}
along the solution trajectories as long as the independent variable $s$ makes sense.

Note that the eigenvalues of the matrix are
\[
\frac{-1 \pm \sqrt{1 + 4 \epsilon W^2}}{2}
\]
so in the shrinking case ($\epsilon < 0$)
they form a complex conjugate pair with negative real part,
 provided $W > \frac{1}{2 \sqrt{-\epsilon}} $.

\medskip

We will now examine the system (\ref{EF}) more closely.

\begin{rmk}
The origin in the $(\E, \F)$-plane
(which is invariant under the flow) represents trajectories
with constant soliton potential $u$, that is,
Einstein trajectories.
\end{rmk}

\begin{rmk}
In the expanding case $\epsilon$ is positive, and hence the quadrants
$\E, \F  < 0$ and $\E , \F >0$ are invariant under the flow (cf \cite{DW3}).
In \cite{DW3} this fact was used to establish bounds on the flow
for the case of expanding solitons of multiple warped product type.

In the shrinking case this is no longer the case. Indeed, $\E^{\prime}$
has the opposite sign to $\F$, while $\F^{\prime}$ takes the same sign as $\E$
on $\F =0$.
The phase plane diagram shows that no quadrant is invariant and indeed
the flow  exhibits rotational behaviour
 around the origin in the $(\E, \F)$-plane.
\end{rmk}

\medskip
In order to analyse the behaviour of the variables $\E$ and $\F$
in the shrinking case, it is convenient to take $\epsilon = -1$.
We observe, from the equations (\ref{EF}), that
\[
(\E^2 + \F^2)^{\prime} = - 2 \F^2
\]
so $\E^2 + \F^2$ is monotonic decreasing. (Note that for a non-trivial
soliton, $\F=\dot{u}$ cannot be $0$ along the flow.)

We also calculate
\begin{equation} \label{arctanEF}
\frac{d}{ds} \tan^{-1} (\E / \F) = \frac{\E^{\prime}\F -\F^{\prime}\E}
{\E^2 + \F^2} = -W + \frac{\E \F}{\E^2 + \F^2}.
\end{equation}
If we let $\theta = \tan^{-1} (\E / \F)$, this equation becomes
\begin{equation} \label{theta}
\frac{d \theta}{ds} = - W + \frac{1}{2} \sin (2 \theta).
\end{equation}

We are interested in estimating the winding angle around the origin
of the flow in the $(\E, \F)$-plane up to the (unique) turning point
$s_*$ (point where $W$ blows up, cf the above discussion).

Let us take the $\F$-axis as horizontal and the $\E$-axis as vertical.
We shall consider trajectories emanating from a point on the positive $\E$-axis
(i.e., $\theta = \frac{\pi}{2}$) since at a special orbit we need $\dot{u} =0$,
that is, $\F=0$. Our phase-plane diagram shows that the flow will initially move into
the quadrant with $\E, \F >0$ and can never re-cross the positive
$\E$-axis from the right. The winding angle is therefore nonpositive. Analogous
statements hold for trajectories emanating from a point on the negative
$\E$-axis, where $\theta= \frac{3\pi}{2}$.

Observe further that the line $\theta = \frac{\pi}{4}$ is a reflecting barrier
for the flow unless $W \geq \frac{1}{2}$. Also note that once $W$ exceeds
$\frac{1}{2}$ then $\theta$ is monotonic decreasing, i.e., the flow
winds clockwise around the origin.

The winding angle is
\[
\int_{-\infty}^{s_*} \theta^{\prime} \; ds = \int_{-\infty}^{W^{-1}(\frac{1}{2})}
\theta^{\prime} \; ds +  \int_{W^{-1}(\frac{1}{2})}^{s_*} \theta^{\prime} \; ds,
\]
where we have arranged for $s=-\infty$ to correspond to $t=0$ (in view of (\ref{st})
and the initial conditions). The value of the first integral is between zero and
$-\frac{\pi}{4}$, from our above remarks, as until $W$ reaches $\frac{1}{2}$ the flow
is trapped between $\theta = \frac{\pi}{2}$ and $\theta = \frac{\pi}{4}$.

For the second integral, we can change variables to rewrite it
(using (\ref{Weqn}) with ${\mathcal G}:= W^2 \tr(L^2)$ and (\ref{theta})) as
\[
\int_{\frac{1}{2}}^{\infty} -\frac{1}{{\mathcal G} + \frac{1}{2} W^2} \; dW
+
\int_{\frac{1}{2}}^{\infty} \frac{\frac{1}{2} \sin (2 \theta)}
{W({\mathcal G} + \frac{1}{2} W^2)} \; dW
\]

These integrals are bounded in absolute value by $4$ and $2$ respectively, just
using the nonegativity of ${\mathcal G}$. We have therefore deduced

\begin{prop} \label{windingbound} For trajectories of the associated
  flow in the $(\E,\F)$-plane starting from either the positive
  ${\mathcal E}$ or negative $\E$ axis, the winding angle around the
  origin up to the turning point
is finite, nonpositive, and bounded below by $-(6 +
  \frac{\pi}{4})$. \eop
\end{prop}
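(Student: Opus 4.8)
The plan is to assemble the phase-plane estimates developed just above into a two-sided bound on the net winding angle $\int_{-\infty}^{s_*}\theta'\,ds$, where $\theta = \tan^{-1}(\E/\F)$ and $s_*$ is the turning point at which $W$ blows up. The upper bound (nonpositivity) is already in hand: a trajectory issuing from $\theta = \frac{\pi}{2}$ has $\theta' = -W < 0$ there by (\ref{theta}), so it moves into the quadrant $\E,\F>0$, and by the phase-plane remark it can never re-cross the positive $\E$-axis from the right; since (\ref{theta}) is invariant under $\theta \mapsto \theta + \pi$, the same holds verbatim for trajectories issuing from $\theta = \frac{3\pi}{2}$. Hence $\theta$ stays below its initial value and the winding is nonpositive, while its finiteness will drop out of the lower bound below.

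For the lower bound I would split the interval at the first instant $s_0 := W^{-1}(\frac{1}{2})$ at which $W$ attains $\frac{1}{2}$; this instant is well defined and unique because (\ref{Weqn}) with $\epsilon = -1$ gives $W' = W^3(\tr(L^2)+\frac{1}{2}) > 0$, so $W$ increases strictly and monotonically from $0$ to $\infty$. On $(-\infty, s_0]$ I would use that the line $\theta = \frac{\pi}{4}$ is a reflecting barrier while $W < \frac{1}{2}$: there $\frac{1}{2}\sin(2\theta) = \frac{1}{2} > W$, so (\ref{theta}) forces $\theta' > 0$ and pushes $\theta$ back up. Combined with $\theta \le \frac{\pi}{2}$, this traps the trajectory in $[\frac{\pi}{4},\frac{\pi}{2}]$, so the contribution $\theta(s_0) - \frac{\pi}{2}$ of this first stretch lies in $[-\frac{\pi}{4},0]$.

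On the tail $[s_0, s_*)$ the variable $W$ is strictly increasing and surjects onto $(\frac{1}{2},\infty)$, so I would change the integration variable from $s$ to $W$. Writing $\mathcal{G} := W^2\tr(L^2)\ge 0$ and using $\frac{ds}{dW} = \bigl(W(\mathcal{G}+\frac{1}{2} W^2)\bigr)^{-1}$ from (\ref{Weqn}) together with (\ref{theta}), the tail contribution becomes
\[
\int_{1/2}^{\infty}\frac{-1}{\mathcal{G}+\frac{1}{2} W^2}\,dW
+ \int_{1/2}^{\infty}\frac{\frac{1}{2}\sin(2\theta)}{W(\mathcal{G}+\frac{1}{2} W^2)}\,dW .
\]
Dropping the nonnegative $\mathcal{G}$ in each denominator and using $|\sin(2\theta)|\le 1$ bounds the first integral in absolute value by $\int_{1/2}^{\infty} 2W^{-2}\,dW = 4$ and the second by $\int_{1/2}^{\infty} W^{-3}\,dW = 2$. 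Adding the $-\frac{\pi}{4}$ from the first stretch gives the lower bound $-(6+\frac{\pi}{4})$, which in particular shows the winding is finite.

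The main obstacle is not any single estimate — all of them are short once set up — but the bookkeeping that legitimises the change of variable. One must verify that $W$ really is a strictly monotone, surjective function of $s$ on the tail (so that $dW$ is a valid differential and the limits are $\frac{1}{2}$ and $\infty$), and that the coordinate singularity at $s_*$ is integrable; both reduce to the growth estimate $W'\ge\frac{1}{2} W^3$ that already established finiteness of $s_*$. A secondary subtlety is that on the first stretch one cannot invoke monotonicity of $\theta$ — which holds only once $W > \frac{1}{2}$ — so the confinement to $[\frac{\pi}{4},\frac{\pi}{2}]$ must be argued purely from the sign of $\theta'$ on the barrier line, as above.
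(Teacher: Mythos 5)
Your proposal is correct and follows essentially the same route as the paper: nonpositivity from the phase-plane observation that the trajectory cannot re-cross the starting half-axis, confinement to $[\frac{\pi}{4},\frac{\pi}{2}]$ via the reflecting barrier while $W<\frac{1}{2}$, and then the change of variables $s\mapsto W$ using (\ref{Weqn}) to bound the two tail integrals by $4$ and $2$. The only additions are the explicit justification of the monotonicity and surjectivity of $W$ on the tail and the $\theta\mapsto\theta+\pi$ symmetry for the negative $\E$-axis, both of which the paper leaves implicit.
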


Of course one would like solutions with winding angle  $-\pi$, so as to get
$\F=0$ at the turning point $s_*$, which is one of the conditions
needed to get a symmetric solution. The above argument shows that one cannot
achieve this by mimicking the B\"ohm techniques--we cannot make the winding
angle arbitrarily large by varying some parameter.

\begin{rmk} \label{HQ}
We can relate $\mathcal{E}, \mathcal{F}$ to the quantities used in the paper
on expanding solitons \cite{DW3} as follows. Let us define
\begin{equation}
{\mathcal H} :=  W \ \tr L \ \ \mbox{\rm and} \ \ \Qa := W^2 \E.
\end{equation}
We recall from \cite{DW3} the calculation:
\begin{equation} \label{udot}
u^{\prime}=\frac{\dot{u}}{-\dot{u} + \tr L} = W {\rm tr} L - 1 = \Ha -1
\end{equation}
which is actually valid for general orbit types. It follows that
${\mathcal F} = \frac{\Ha-1}{W}.$

We can then derive easily the equations:
\begin{equation} \label{eqnH}
\left({\mathcal H} -1 \right)^{\prime} =
\left( {\mathcal G} -1 -\frac{\epsilon}{2} W^2 \right)
({\mathcal H}-1 ) + \Qa,
\end{equation}
\begin{equation} \label{eqnQ}
\Qa^{\prime} =\epsilon W^2 \left( {\mathcal H} -1 \right) +
2 \left({\mathcal G} - \frac{\epsilon}{2} W^2 \right) \Qa,
\end{equation}
which we saw in \cite{DW3} in the special case where $\epsilon$
was positive and the cohomogeneity one metric was a warped product on $m$
factors.

The associated system (\ref{EF}) described earlier may be regarded as a
renormalised version of the above pair of equations where we replace
$\Ha -1$ and $\Qa$ by their quotients by appropriate powers of the variable $W$.
Note that the subset $\{{\mathcal H} =1, {\mathcal Q} = 0 \}$ is invariant under
the flow of the GRS equations (at least while the variable $s$ is defined), and
consists of Einstein trajectories.
\end{rmk}

\section{\bf Some properties of complete shrinkers of cohomogeneity one}

In this section we describe some properties of complete shrinking gradient Ricci
solitons with cohomogeneity one.

It is known that the scalar curvature of a complete steady or shrinking
gradient Ricci soliton must be non-negative. In the compact case, steady
GRS are in fact Ricci flat and shrinking GRS must have positive scalar
curvature (cf Proposition 1.13 of \cite{Cetc}). For the non-compact case,
the non-negativity follows from a theorem of B. L. Chen (Corollary 2.5 of
\cite{Ch}) on
complete ancient solutions of the Ricci flow. (The result was previously
known assuming sectional curvature bounds.)

For cohomogeneity one gradient Ricci solitons, the ambient scalar curvature
$\overline R$ is given by
\begin{equation} \label{ambientR}
\overline{R} = -2\ \tr(\dot{L}) - \tr(L^2) - (\tr L)^2 + S,
\end{equation}
where $S$ denotes the scalar curvature of the principal orbits (cf \S 1 of \cite{DW1}).
Now we have
\begin{eqnarray*}
 \overline{R} &=& -S + (\tr L)^2 - \tr(L^2) -2 \dot{u} \ \tr L - \epsilon n \\
   &=& -C -\epsilon u - {\dot{u}}^2 - \frac{\epsilon}{2} (n+1)  \\
   &=& -\E -\frac{\dot{\E}^2}{\epsilon^2} - \frac{\epsilon}{2}(n+1) ,
\end{eqnarray*}
where we used (\ref{soleqS}) and  (\ref{ham}) respectively
in the first two equalities. Now $\overline R \geq 0$ immediately
leads to the upper bound
\begin{equation} \label{Ebound}
     \E \leq -\frac{\epsilon}{2} (n+1),
\end{equation}
which is strict in the compact shrinking case.

We take this opportunity to emphasize that $\mathcal E$ can be viewed
both as a function on $M$ and a function on $I$, and we will frequently
switch from one viewpoint to the other in the following.

\begin{rmk} \label{steadyC}
For the steady case, the above inequality becomes $C \leq 0$ with equality
iff $\dot{u}$ is identically zero, i.e., the soliton is trivial.
\end{rmk}

\begin{rmk} \label{expanderC}
In the case of a complete non-trivial expanding GRS of cohomogeneity one,
Corollary 2.3 of \cite{Ch}, applied to the Ricci flow generated by the expander,
implies that $\overline R > -\frac{\epsilon}{2}(n+1).$ (Note that our ambient
manifold has dimension $n+1$.) Since $\epsilon$ is now positive,
the trace of (\ref{gradRS}) yields $\Delta \E < 0$, where $\Delta$
is the negative Laplacian of the cohomogeneity one metric. As $\E$ is a function of
$t$ only, $\Delta \E = \ddot{\E} + (\tr L)\dot{\E}$. Using the above facts in
(\ref{eqnE}) we obtain $\E < 0$. In particular, if we assume without loss of generality
that $u(0)=0$, then the constant $C$ in the conservation law must be negative.
\end{rmk}

Going back to the shrinking case, recall that the smoothness conditions at a special
orbit placed at $t=0$ imply that
\begin{equation} \label{expand-u}
 u = u(0) + \frac{1}{2} \ \ddot{u}(0) \ t^2 + \cdots
\end{equation}
and
\begin{equation} \label{expand-trL}
 \tr L = \frac{k}{t} + \cdots
\end{equation}
for sufficiently small values of $t$. (Here the mean curvature
$\tr L$ of the principal orbits is taken with respect to the ``outward" normal
$\frac{\partial}{\partial t}$.)
Hence, in the case of a complete shrinking GRS of cohomogeneity one, the conservation
law (\ref{cons1}) and (\ref{Ebound}) give
\begin{equation} \label{udotdot}
 (k+1) \ddot{u}(0) = C + \epsilon u(0) = {\mathcal E}(t=0) \leq  -\frac{\epsilon}{2} (n+1).
\end{equation}
So for the initial value $\ddot{u}(0)$ we have the upper bound
\begin{equation} \label{iv-u}
\ddot{u}(0) \leq -\frac{\epsilon}{2}\left(\frac{n+1}{k+1} \right),
\end{equation}
which must be strict at both endpoints in the case of compact shrinkers.
This means that the trajectories of the associated flow in the $(\E, \F)$-plane
cannot start arbitrarily high on the positive $\E$-axis if they correspond to
complete solutions, compact or not. By contrast, note that there is no
such restriction on $\ddot{u}(0)$ from the initial value problem (cf \cite{Bu})
around special orbits.

Note also that the upper bound $\E(t=0) \leq -\frac{\epsilon}{2}(n+1)$ is attained by the
conical Gaussian soliton on $\R^{n+1}$. Indeed, in this case $\E = -\frac{\epsilon}{2}(n+1)
-\frac{\epsilon^2}{4}t^2$ (see Remark \ref{CG}).

\begin{prop}  \label{compactE}
Let $(M, \bar{g})$ be a non-trivial compact GRS of cohomogeneity one
with $G$-invariant potential function $u$. Let $[0, T]$ denote the closure of
the interval $I$ and ${\mathcal E} = C + \epsilon u$, where the constant $C$
is that appearing in $($\ref{ham}$)$. Then:
\begin{enumerate}
\item[a.] $\mathcal E$ must change sign and is a Morse-Bott function on $M$.
\item[b.] $\mathcal E < -\frac{\epsilon}{2}(n+1)$.
\item[c.] $\mathcal E$ has at most $4$ critical points in $(0, T)$. If $t_0 \in (0, T)$
     is a critical point then it is either a local maximum if ${\mathcal E}(t_0) > 0$
     or a local minimum if ${\mathcal E}(t_0) < 0$.
\end{enumerate}
\end{prop}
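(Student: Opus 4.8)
The plan is to route all three parts through the single elliptic equation that $\E$ satisfies on $M$, normalising $\epsilon=-1$ as in \S1. Part (b) needs essentially no new work: the bound \eqref{Ebound}, $\E\le-\frac{\epsilon}{2}(n+1)$, has already been derived from $\overline R\ge 0$, and the text records that it is strict in the compact shrinking case (where $\overline R>0$); so one only has to note that $\overline R(t)>0$ forces $\E<-\frac{\epsilon}{2}(n+1)$ pointwise. For the sign-change half of (a), I would first turn \eqref{eqnE} into a statement about the drift Laplacian. Using $\Delta\E=\ddot\E+(\tr L)\dot\E$ (as in Remark \ref{expanderC}) and substituting $\ddot\E=-\xi\dot\E+\epsilon\E$ from \eqref{eqnE} with $\xi=\tr L-\dot u$, one obtains $\Delta\E-\dot u\,\dot\E=\epsilon\E$, that is $\Delta_u\E=\epsilon\E$, where $\Delta_u:=\Delta-\langle\nabla u,\nabla\,\cdot\,\rangle$ and $\langle\nabla u,\nabla\E\rangle=\dot u\,\dot\E$ since both functions depend on $t$ alone. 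Thus $\E$ is a weighted eigenfunction with eigenvalue $\epsilon$.

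Since $\Delta_u\E\,e^{-u}=\mathrm{div}(e^{-u}\nabla\E)$, integration over the closed manifold $M$ gives $\int_M\Delta_u\E\,e^{-u}\,d\mu=0$, hence $\epsilon\int_M\E\,e^{-u}\,d\mu=0$ and therefore $\int_M\E\,e^{-u}\,d\mu=0$. A nonvanishing $\E$ would have constant sign, contradicting this, while $\E\equiv 0$ would force $u$ constant, i.e. a trivial Einstein soliton; so $\E$ must change sign, which is the first claim of (a).

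For the remainder of (a) and for (c) the key remark is that at any critical point $t_0$ of $\E$ on $I$ (a point where $\dot\E=\epsilon\dot u=0$) equation \eqref{eqnE} gives $\ddot\E(t_0)=\epsilon\E(t_0)$. I would first check $\E(t_0)\ne 0$: if $\E(t_0)=\dot\E(t_0)=0$ at an interior point then linear ODE uniqueness for \eqref{eqnE} (whose coefficients are smooth on $(0,T)$) forces $\E\equiv 0$, which is impossible; at an endpoint the same conclusion holds because the space of solutions of \eqref{eqnE} smooth at the singular point $t=0$ (where $\xi\sim k/t$) is one-dimensional and parametrised by $\E(0)$, so $\E(0)=0$ again gives $\E\equiv 0$. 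With $\E(t_0)\ne 0$ and $\epsilon<0$, the sign of $\ddot\E(t_0)=\epsilon\E(t_0)$ shows $t_0$ is a strict local maximum when $\E(t_0)>0$ and a strict local minimum when $\E(t_0)<0$; this is the dichotomy in (c) and simultaneously gives non-degeneracy in the normal direction. The critical set of $\E$ on $M$ then consists of the two special orbits together with the (isolated, hence finitely many) principal orbits over interior critical values, each a $G$-orbit on which $\E$ is constant. Transverse to an interior critical orbit non-degeneracy is $\ddot\E(t_0)\ne 0$; transverse to a special orbit of codimension $k+1$ one uses \eqref{expand-u} to write $\E=\E(0)+\tfrac{\epsilon}{2}\ddot u(0)\,t^2+\cdots$ with $t^2$ the squared distance to the orbit, giving normal Hessian $\tfrac{\epsilon\,\E(0)}{k+1}\,\mathrm{Id}$, non-degenerate by \eqref{udotdot} and $\E(0)\ne 0$. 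This yields the Morse-Bott property.

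The count in (c) is the main obstacle. Critical points in $(0,T)$ are the zeros of $\F=\dot u$, i.e. the crossings of the $\E$-axis by the planar curve $(\E,\F)$, which I would count with the winding analysis of \S1. The curve avoids the origin (otherwise $\E=\dot\E=0$ at a point, forcing $\E\equiv 0$), so the angle $\theta$ of \eqref{theta} is defined, and a direct computation gives $d\theta/dt=-1+\tfrac{\xi}{2}\sin2\theta$, so that at each crossing ($\F=0$, hence $\sin2\theta=0$) one has $d\theta/dt=-1<0$; the crossings are therefore one-directional. Splitting at the unique turning point $t_*$ and applying Proposition \ref{windingbound} on each side — the second special orbit also lies on the $\E$-axis, so the trajectory run backward from $t=T$ to $t_*$ is covered by the same bound — the winding on each side has magnitude at most $6+\tfrac{\pi}{4}<3\pi$, so one-directionality allows at most two crossings on $(0,t_*]$ and at most two on $(t_*,T)$, giving at most four critical points. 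The delicate bookkeeping lies here: one must verify that the endpoint angles $\theta(0^+),\theta(T^-)$ are genuine $\E$-axis values, that the turning point is assigned to exactly one of the two half-intervals so as not to be double counted, and that the quantitative bound $6+\tfrac{\pi}{4}<3\pi$ indeed rules out a third crossing on each side. These steps, rather than the eigenfunction and ODE-uniqueness arguments, are where the real care is required.
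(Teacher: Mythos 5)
Your argument is correct, and for part (b), the critical-point dichotomy, the Morse--Bott property, and the count in (c) it follows essentially the same route as the paper: (b) is \eqref{Ebound} plus positivity of the scalar curvature of a compact shrinker; the dichotomy and non-degeneracy come from $\ddot{\E}(t_0)=\epsilon\,\E(t_0)$ together with $\E(t_0)\neq 0$ (the paper deduces the latter from the flow-invariance of the origin in the $(\E,\F)$-plane, you from linear ODE uniqueness and the one-dimensionality of regular solutions at the singular endpoint --- the same content); and the bound of $4$ comes from applying Proposition \ref{windingbound} separately on $[0,t_*]$ and, for the time-reversed trajectory, on $[t_*,T]$. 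Your explicit remark that $d\theta/dt=-1$ at every $\E$-axis crossing makes precise the one-directionality that the paper leaves implicit, and your bookkeeping at $t_*$ is sound; the paper additionally records a parity refinement (an odd count, hence at most $3$ critical points, when $\E(0)$ and $\E(T)$ have the same sign), which you do not need for the stated bound of $4$. The one genuinely different step is the sign change in (a): you integrate $\operatorname{div}(e^{-u}\nabla\E)=e^{-u}\Delta_u\E=\epsilon\,\E\,e^{-u}$ over the closed manifold to get $\int_M \E\, e^{-u}\, d\mu=0$, whence $\E$ takes both signs unless $\E\equiv 0$, which would make the soliton trivial. The paper instead computes $\Delta\E=\epsilon\E+\tfrac{1}{\epsilon}\dot{\E}^2$, concludes $\int_M\E\,d\mu<0$, and then rules out $\E\le 0$ by a contradiction at the first interior critical point using convexity of $\E$ just after $t=0$. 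Your version is shorter, needs no case analysis, and is the standard drift-eigenfunction argument for $\Delta_u u$; the paper's version yields the extra (unweighted) information $\int_M\E\,d\mu<0$. Both are valid.
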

\begin{proof}
We note first that by (\ref{eqnE}) $\mathcal E$ can be viewed as a solution
to the {\em linear} equation
$$\ddot{f} + \xi \dot{f} - \epsilon {f} = 0.$$
On $(0, T)$ the coefficient $\xi$ is finite-valued and at least $C^1$. Since the
soliton is non-trivial, $\mathcal E$ cannot vanish at an interior critical point $t_0$.
Also, $\ddot{\mathcal E}(t_0) = \epsilon {\mathcal E}(t_0)$ holds at such a point,
and the only non-vanishing component of the Hessian of $\mathcal E$
(as a function on $M$) is that in the
$\frac{\partial}{\partial t}$ direction. So the critical submanifold of $\mathcal E$
at $t_0$, a principal orbit, is non-degenerate. This equation also implies the second part of c.

Next we consider the situation at $t=0$. (The situation at $t=T$ is completely analogous.)
By smoothness, we have $\dot{\mathcal E}(0)= \epsilon \dot{u}(0) = 0$. If
${\mathcal E}(0) = 0,$ our soliton would be trivial, as the origin $({\mathcal E}, {\mathcal F})=
(0, 0)$ is invariant under the associated flow (\ref{EF}) and represents the
Einstein trajectories. Now the computation (\ref{udotdot}) shows that
$\ddot{u}(0)= \frac{1}{\epsilon}\,\ddot{\mathcal E}(0)$
has the same sign as ${\mathcal E}(0)$. In particular, the two singular orbits are
also non-degenerate critical submanifolds of $u$, so that $u$ is Morse-Bott. Since $M$
is compact, $\mathcal E$ can only have a finite number of critical points,
considered as a function on $I$. Because
$6 + \frac{\pi}{4} < \frac{5}{2} \pi$, we may apply Proposition \ref{windingbound}
separately to each of the intervals $[0, t_*]$ and $[t_*, T]$ to estimate the total
number of critical points in $(0, T)$. (Here as usual
$t_*$ denotes the unique turning point).
Indeed, when ${\mathcal E}(0)$ and ${\mathcal E}(T)$
have the same sign, the number of critical points is odd, and so is bounded above by $3$.
When ${\mathcal E}(0)$ and ${\mathcal E}(T)$ have opposite signs, then number of critical
points is even, and so is bounded above by $4$.

The inequality in part b is just (\ref{Ebound}) for the compact case.

It remains to show that $\mathcal E$ must change sign. The (negative) Laplacian of
$\mathcal E$ is
$$ \Delta {\mathcal E} = \ddot{\mathcal E} + (\tr L) \dot{\mathcal E} = \epsilon {\mathcal E} +
       \frac{1}{\epsilon} \ \dot{\mathcal E}^2, $$
where we have used (\ref{eqnE}) in the second equality above. Hence the
integral of $\mathcal E$ is negative if the soliton is non-trivial.

Suppose that ${\mathcal E} \leq 0$ on $[0, T]$. By what we already proved,
${\mathcal E}$ must then be negative at $t=0$ and $t= T$, and hence $\ddot{\mathcal E}$
must be positive at these endpoints. Therefore, $\mathcal E$ must have
interior critical points. At each such point $\mathcal E$ is negative,
so that $\ddot{\mathcal E}$ is positive. We obtain a contradiction at the first
interior critical point, which must be a local maximum because $\mathcal E$ is
convex for a little while after $t=0$.
\end{proof}

\begin{rmk} At present the only known examples of
nontrivial compact shrinking Ricci solitons
are K\"ahler. These are generalizations of the Koiso examples
(cf \cite{ACGT}, \cite{DW1} and references therein), and $\mathcal E$ is monotone
(no interior critical points) in these examples as a result of the K\"ahler condition.
In particular, $\mathcal E$ is a perfect Morse-Bott function.
\end{rmk}

In order to state the analogous result in the complete non-compact case, we need
to recall a general result of Cao and Zhou (\cite{CZ}, Theorem 1.1) concerning
the behaviour of the potential function. Adapted to the cohomogeneity one situation
and with our notation, their theorem implies that, for sufficiently large $t$,
\begin{equation} \label{caozhou}
-\frac{\epsilon}{2}(n+1) + \frac{\epsilon}{4}(t\sqrt{-\epsilon}  + c_2)^2
\leq {\mathcal E}(t)= C + \epsilon u(t) \leq -\frac{\epsilon}{2}(n+1) +
   \frac{\epsilon}{4}(t\sqrt{-\epsilon} - c_1)^2
\end{equation}
where $c_1, c_2 > 0$ are constants which depend on $\dim M = n+1$ and the geometry
of the unit ball in $M$ centred at a point on the singular orbit, e.g., the initial
point of the solution trajectory associated to the soliton.

\begin{prop} \label{noncompactE}
Let $(M, \bar{g})$ be a non-trivial complete, non-compact GRS of cohomogeneity one
with $G$-invariant potential function $u$. Let ${\mathcal E} = C + \epsilon u$, where
the constant $C$ is the one appearing in $($\ref{ham}$)$. Then:
\begin{enumerate}
\item[a.] $\mathcal E$ must change sign and is a Morse-Bott function on $M$.
\item[b.] $\mathcal E$ satisfies $($\ref{caozhou}$)$ above. In particular,
      $\mathcal E \leq -\frac{\epsilon}{2}(n+1)$ and eventually decreases
      monotonically to $-\infty$.
\item[c.] $\mathcal E$ has at most $5$ critical points in $(0, +\infty)$.
     If $t_0 \in (0, +\infty)$ is such a critical point then it is either a
     local maximum if ${\mathcal E}(t_0) > 0$
     or a local minimum if ${\mathcal E}(t_0) < 0$.
\end{enumerate}
\end{prop}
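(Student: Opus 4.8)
The plan is to mirror the proof of Proposition~\ref{compactE}, replacing the use of compactness by the Cao--Zhou asymptotics (\ref{caozhou}). Exactly as there, $\mathcal{E}$ solves the linear equation (\ref{eqnE}) with smooth coefficient $\xi$ on $(0,\infty)$; at an interior critical point $t_0$ one has $\ddot{\mathcal{E}}(t_0)=\epsilon\,\mathcal{E}(t_0)$, so $\mathcal{E}(t_0)\ne 0$ (otherwise the trajectory sits at the Einstein origin of (\ref{EF}) and the soliton is trivial) and $t_0$ is a non-degenerate maximum if $\mathcal{E}(t_0)>0$ and a minimum if $\mathcal{E}(t_0)<0$; this is the second sentence of part~c. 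At $t=0$ smoothness gives $\dot{\mathcal{E}}(0)=0$, while (\ref{udotdot}) shows $\ddot u(0)$ has the same nonzero sign as $\mathcal{E}(0)$, so $\ddot{\mathcal{E}}(0)=\epsilon\,\ddot u(0)\ne 0$ and every critical submanifold is non-degenerate. Hence $\mathcal{E}$ is Morse--Bott, which settles the Morse--Bott claim in part~a and the local description in part~c.

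For part~b, the two-sided bound (\ref{caozhou}) is Cao--Zhou in our notation, and $\mathcal{E}\le-\tfrac{\epsilon}{2}(n+1)$ is (\ref{Ebound}); its lower bound forces $\mathcal{E}\to-\infty$. To upgrade this to eventual monotonicity I argue structurally: once $t$ is large enough that $\mathcal{E}<0$, every critical point there is a local minimum by part~c, and two minima cannot occur without an intervening maximum (which would need $\mathcal{E}>0$); so there is at most one such critical point, and a single interior minimum would make $\mathcal{E}$ increase thereafter, contradicting $\mathcal{E}\to-\infty$. Thus $\mathcal{E}$ has no critical point for large $t$ and, being unbounded below, is eventually strictly decreasing. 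The sign change in part~a then follows from this together with the local structure: if $\mathcal{E}(0)>0$ it is immediate, while if $\mathcal{E}(0)<0$ then $t=0$ is a local minimum, so $\mathcal{E}$ increases initially, and were $\mathcal{E}\le 0$ throughout $(0,\infty)$ the same maximum/minimum alternation would prevent $\mathcal{E}$ from ever turning back down, again contradicting $\mathcal{E}\to-\infty$.

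For the count in part~c I use that critical points of $\mathcal{E}$ are precisely the $\mathcal{E}$-axis crossings of the flow (\ref{EF}) (where $\dot{\mathcal{E}}=\epsilon\mathcal{F}=0$), at each of which $\theta=\tan^{-1}(\mathcal{E}/\mathcal{F})$ decreases through a value $\equiv\tfrac{\pi}{2}\pmod{\pi}$, and I split $(0,\infty)$ at the unique turning point $t_*$. On $(0,t_*)$ the trajectory emanates from the $\mathcal{E}$-axis at the special orbit, so Proposition~\ref{windingbound} applies verbatim; as $6+\tfrac{\pi}{4}<\tfrac{5}{2}\pi$, at most two crossings occur there, exactly as in the compact case. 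On $(t_*,\infty)$ one has $\xi<0$, so $W=\xi^{-1}$ runs over $(-\infty,0)$, and by part~b the trajectory tends to the negative $\mathcal{E}$-axis, $\theta\to-\tfrac{\pi}{2}$, as $t\to\infty$. Re-running the change of variables behind Proposition~\ref{windingbound} (with $\epsilon=-1$) and integrating $d\theta/dW$ over $W\in(-\infty,-\tfrac12)$, the same two estimates (using only $\mathcal{G}\ge 0$ and $|W|\ge\tfrac12$) bound this part of the winding by $4+2=6<2\pi$, so at most two crossings occur while $W<-\tfrac12$; in the tail $W\in(-\tfrac12,0)$ the asymptotics (\ref{caozhou}) force $\theta$ to settle monotonically onto $-\tfrac{\pi}{2}$, crossing the $\mathcal{E}$-axis at most once more. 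This gives at most three critical points on $(t_*,\infty)$, hence at most five in all.

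The main obstacle is precisely this tail $W\in(-\tfrac12,0)$: there is no special orbit to anchor the flow, so the clean bound of Proposition~\ref{windingbound} is unavailable, and near $W=0$ the two pieces of $d\theta/dW$ diverge individually, only their cancellation --- the genuine winding rate $-1+\tfrac{\xi}{2}\sin 2\theta$, which tends to $0$ --- being controlled. Turning the qualitative statement ``$\theta\to-\tfrac{\pi}{2}$'' into the quantitative assertion that the approach is monotone enough to produce at most one further crossing is where the real work lies; I would extract from (\ref{caozhou}) the leading behaviour $\mathcal{E}\sim-\tfrac{\epsilon^2}{4}t^2$ and $\mathcal{F}\sim-\tfrac{\epsilon}{2}t$, and track $\theta=-\tfrac{\pi}{2}+\delta$ with $\delta\approx-W\to 0^+$ to verify that the final approach is crossing-free.
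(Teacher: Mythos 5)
Your proposal reproduces the paper's argument for parts a and b and for the local analysis of critical points: the treatment of (\ref{eqnE}) as a linear ODE, the non-degeneracy at interior critical points and at the singular orbit, the sign change via the convexity/alternation argument, and the eventual monotonicity obtained by excluding critical points in the region where $\mathcal{E}<0$ are all exactly as in the paper. The decomposition of the winding count at the turning point $t_*$, the bound of two crossings on $(0,t_*)$ from Proposition \ref{windingbound}, and the bound $4+2=6$ for the portion after the turning point where $|W|\ge\frac{1}{2}$ also coincide with the paper's computation (the paper packages the latter via a new forward-running independent variable $\sigma$ with $d\sigma=-\xi\,dt$, since $s$ runs backwards once $\xi<0$).

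The genuine gap is the one you flag yourself: the tail $W\in(-\frac{1}{2},0)$. Your proposed fix --- extracting $\mathcal{F}\sim-\frac{\epsilon}{2}t$ from (\ref{caozhou}) and tracking $\theta+\frac{\pi}{2}$ --- is not carried out, and it would need more than the quoted form of Cao--Zhou, which bounds $\mathcal{E}$ itself but not its derivative; pointwise asymptotics for $\dot{u}$ require an additional gradient estimate. The paper avoids all of this with a soft barrier argument that needs no asymptotics at all: in the $\sigma$ variable the analogue of (\ref{theta}) reads $d\theta/d\sigma = W - \frac{1}{2}\sin(2\theta)$, and once $W\ge-\frac{1}{2}$ every angle $\theta_0$ with $\sin(2\theta_0)=-1$ (i.e.\ $\theta_0\equiv\frac{3\pi}{4}\pmod{\pi}$) satisfies $d\theta/d\sigma\,|_{\theta_0}=W+\frac{1}{2}\ge 0$ for all later $\sigma$, hence is a reflecting barrier for the clockwise motion. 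Whatever $\theta$ is when $W$ first reaches $-\frac{1}{2}$, such a barrier lies within $\pi$ below it, so the net decrease of $\theta$ in the tail is at most $\pi$ and (since axis crossings are transversal and downward) at most one further critical point occurs there. This yields your count $2+2+1=5$ and, as the paper explicitly notes, makes the critical-point bound independent of the Cao--Zhou input.
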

\begin{proof} The local properties of $\mathcal E$ follow as in the compact case.
The inequality (\ref{caozhou}) implies that $\mathcal E$ tends to $-\infty$. Suppose
${\mathcal E} \leq 0$ everywhere. Since ${\mathcal E}(0)< 0$ and $\mathcal E$ is
convex for a while after $t=0$, there must be a critical point at some $t_0 > 0$.
At the first such critical point we have a contradiction as in the compact case.
So $\mathcal E$ changes sign.

      Next choose $t_1$ such that ${\mathcal E} < 0$ for all $t \geq t_1$.
So any critical point $t_2 > t_1$ must have $\ddot{\mathcal E} (t_2) > 0$ and hence
must be a minimum. Since $\mathcal E$ tends to $-\infty$, we now see that
 there are actually no critical points in $[t_1, +\infty)$,
and so the number of critical points is finite. It also follows that $\dot{\mathcal E} < 0$
for $t \geq t_1$.

     To get a more precise bound on the number of interior critical points, we
will modify the arguments in \S 1 leading up to Proposition \ref{windingbound}. (Note
that here we do not need the work of Cao-Zhou.) Recall that $\xi$ has a unique
zero at $t=t_*$. For the portion of the trajectory with $t \leq t_*$, the proof of
Proposition \ref{windingbound} gives a lower bound of $-(6+\frac{\pi}{4})$ for
the winding angle. For the portion with $t \geq t_*$, we can introduce the
independent variable
\begin{equation} \label{def-sigma}
    \sigma = s_* - \int_{t_*}^{t} \ \xi(x)\ dx, \ \ \ \mbox{i.e.,} \ d\sigma = -\xi dt.
\end{equation}

    Differentiating ${\mathcal E}, {\mathcal F}$ with respect to $\sigma$ (with
$\epsilon= -1$), we now have the following analogue of (\ref{theta}):
$$ \frac{d\theta}{d\sigma} = W - \frac{1}{2} \sin(2\theta)$$
Note that the angle $\theta$ is well-defined and smooth as $t$ passes through $t_*$
and as $t \rightarrow +\infty$, so does $\sigma$.

We need to estimate the integral
$$ \int_{s_*}^{\infty} \ \ \frac{d\theta}{d\sigma} \ d\sigma.$$
Recall that $W$ is now negative and increases strictly to $0$ as $\sigma \rightarrow +\infty$
and satisfies the equation
$$ \frac{dW}{d\sigma} = -W \left(\frac{1}{2} W^2 + {\mathcal G}\right). $$
Let $\sigma_1$ be the value such that $W(\sigma_1) = -\frac{1}{2}$. As in \S 1, the
integral on $[s_*, \sigma_1]$ is bounded below by $-6$. On $[\sigma_1, +\infty)$
the net change in $\theta$ is bounded below by $-\pi$ because whatever $\theta(\sigma_1)$
equals, one encounters a reflecting barrier of the flow (in the clockwise direction)
at or before $\theta(\sigma_1) - \pi$. Hence the integral over the entire
trajectory is bounded below by $-(6+ \frac{\pi}{4}) - (6 + \pi) > -(5.1)\pi.$
This gives the desired bound for the number of critical points.
\end{proof}

\begin{rmk} So far the only examples of complete non-compact shrinking
Ricci solitons are the K\"ahler examples of \cite{FIK} and their
generalizations (cf \cite{DW1}), and (up to covers) the
Gaussian soliton and its product
with a finite number of Einstein spaces. In all these cases,
$\mathcal E$ is monotone decreasing. Critical points of the soliton potential
for complete non-compact shrinkers have been studied in \cite{Na}
(cf Corollary 2.1) assuming Ricci bounds.

\end{rmk}

Next we introduce a function which has the Lyapunov property
for the cohomogeneity one GRS equations in a region of phase space.
This generalises an analogous function due to C. B\"ohm \cite{B2} for the Einstein case.
Let
\begin{equation} \label{def-F}
{\mathscr F} := v^{\frac{2}{n}} \left(S + \tr((L^{(0)})^2)   \right)
\end{equation}
where $v(t)=\sqrt{\det g_t}$, $S$ is the scalar curvature of the principal orbit, and
$L^{(0)} := L - \frac{\tr L}{n} \I$ is the traceless part of the shape operator.
Modulo the conservation law (\ref{ham}), $\mathscr F$ can also be written as
\begin{equation} \label{def2-F}
{\mathscr F}= v^{\frac{2}{n}}\left(\frac{n-1}{n} (\tr L)^2 + \dot{u}(\dot{u} - 2\ \tr L)
       + {\mathcal E} - \frac{\epsilon}{2}(n-1)   \right).
\end{equation}

\begin{prop} \label{Lyapunov}
Along the trajectory of a cohomogeneity one GRS of class $C^3$ we have
$$ \dot{\mathscr F} = -2 v^{\frac{2}{n}} \ \tr((L^{(0)})^2) \left(\xi - \frac{1}{n} \tr L \right).$$
Hence $\mathscr F$ is non-decreasing wherever $\xi \leq \frac{1}{n} \tr L$.
\end{prop}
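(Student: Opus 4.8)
The plan is to work with the second expression (\ref{def2-F}) for $\mathscr F$ rather than with the defining formula (\ref{def-F}), writing $\mathscr F = v^{2/n} B$ with
$B := \frac{n-1}{n}(\tr L)^2 + \dot u(\dot u - 2\tr L) + \E - \frac{\epsilon}{2}(n-1)$.
The advantage is that every summand of $B$ is built from quantities whose $t$-derivatives are already governed by the equations established above: the mean curvature $\tr L$ through (\ref{soleqN}), the potential derivative $\dot u$ through the conservation law (\ref{cons1}), and $\E = C + \epsilon u$ through $\dot{\E} = \epsilon\dot u$. In particular this route never requires differentiating the orbital scalar curvature $S$ directly. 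The $C^3$ hypothesis is exactly what guarantees that the second-order quantities $\ddot u$ and $\tr\dot L$ entering $\dot B$ are continuous and that the soliton system may be invoked classically; the equivalence of (\ref{def-F}) and (\ref{def2-F}) is granted ``modulo the conservation law (\ref{ham})''.

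The mechanical steps are as follows. Since $\dot g_t = 2 g_t L$ one has $\dot v/v = \tr L$, hence $\frac{d}{dt}(v^{2/n}) = \frac{2}{n}(\tr L)\,v^{2/n}$ and $\dot{\mathscr F} = v^{2/n}\big(\frac{2}{n}(\tr L)\,B + \dot B\big)$. Next I differentiate $B$ term by term, substituting $\tr\dot L = -\tr(L^2) + \ddot u + \frac{\epsilon}{2}$ from (\ref{soleqN}) and $\dot{\E} = \epsilon\dot u$. Writing $p := \tr L$ for brevity, a short computation collects $\dot B$ into $\ddot u$-terms, $\tr(L^2)$-terms and $\epsilon$-terms. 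Finally I replace $\ddot u$ using the conservation law (\ref{cons1}), which in terms of $\xi$ and $\E = C + \epsilon u$ (see (\ref{defE})) reads $\ddot u + \xi\dot u = \E$, so that $\E - \ddot u = \xi\dot u = (-\dot u + p)\dot u$. This is the step that eliminates both $\E$ and $\ddot u$ from $\frac{2}{n}p\,B + \dot B$ and, pleasantly, makes all $\epsilon$-dependent terms cancel, which serves as a consistency check.

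It then remains to match the resulting polynomial in $p$, $\dot u$ and $\tr(L^2)$ against the claimed right-hand side. For this I expand the target using $\tr((L^{(0)})^2) = \tr(L^2) - \frac{1}{n}p^2$ and $\xi - \frac{1}{n}\tr L = -\dot u + \frac{n-1}{n}p$ (from (\ref{xi})), and verify termwise that $\frac{2}{n}p\,B + \dot B = -2\,\tr((L^{(0)})^2)\big(\xi - \frac{1}{n}p\big)$; each of the four monomials $p^3$, $\dot u\,p^2$, $p\,\tr(L^2)$ and $\dot u\,\tr(L^2)$ matches with the correct coefficient. Multiplying through by $v^{2/n}$ yields the stated formula for $\dot{\mathscr F}$. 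The monotonicity conclusion is then immediate: $v^{2/n} > 0$ and $\tr((L^{(0)})^2) \geq 0$ always, so $\dot{\mathscr F}$ has the sign of $-(\xi - \frac{1}{n}\tr L)$, and in particular $\dot{\mathscr F} \geq 0$ wherever $\xi \leq \frac{1}{n}\tr L$.

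As for difficulty, along this route the computation is essentially bookkeeping and I expect no conceptual obstacle; the one thing to get right is the collection of terms in $\dot B$ together with the correct use of (\ref{cons1}) to remove $\ddot u$, since a sign slip there would spoil the $\epsilon$-cancellation and the termwise match. The genuinely delicate alternative---differentiating the defining form (\ref{def-F}) directly---would instead require the first-variation formula for the orbital scalar curvature $S$, whose divergence terms must be shown to vanish by $G$-invariance of $L$; choosing the form (\ref{def2-F}) is precisely what sidesteps that analytic subtlety.
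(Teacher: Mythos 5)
Your proposal is correct and follows essentially the same route as the paper: both differentiate the form (\ref{def2-F}), use $\dot v=(\tr L)v$ and (\ref{soleqN}) to eliminate $\tr\dot L$, invoke the conservation law (\ref{cons1}) to remove $\ddot u$ and $\E$, and finish by splitting $\tr(L^2)$ into $\tr((L^{(0)})^2)+\frac{1}{n}(\tr L)^2$. The termwise match you describe checks out, so there is nothing to add.
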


\begin{proof}
If we differentiate (\ref{def2-F}), use $\dot{v}= (\tr L)v$ and (\ref{soleqN}), we obtain after
some simplification
\begin{eqnarray*}
 \dot{\mathscr F} &=& \frac{2}{n}\  v^{\frac{2}{n}} (\tr L) \left( \frac{n-1}{n} (\tr L)^2 +
         \dot{u}^2 -2\dot{u} (\tr L) + {\mathcal E} - \frac{\epsilon}{2}(n-1) \right)  \\
         &    & + \ \ v^{\frac{2}{n}} (\tr L) \left(\frac{n-1}{n} \ \epsilon - \frac{2}{n}\ddot{u}
               + 2\tr(L^2)\left(\frac{\dot{u}}{\tr L} - \frac{n-1}{n}  \right)   \right).
\end{eqnarray*}
Splitting up $\tr(L^2)$ as $\tr( (L^{(0)})^2) + \frac{1}{n} (\tr L)^2$ and simplifying gives
\begin{eqnarray*}
\dot{\mathscr F} & = & v^{\frac{2}{n}} (\tr L) \left( \frac{2(n-1)}{n^2} (\tr L)^2
          -\frac{2}{n}(\ddot{u} - \dot{u}^2 + 2 \dot{u} (\tr L) - {\mathcal E})  \right. \\
         &  &  + \left.  \  2\left(\frac{\dot{u}}{\tr L} - \frac{n-1}{n}\right) \tr( (L^{(0)})^2 ) +
             \frac{2}{n} \dot{u} (\tr L) -\frac{2(n-1)}{n^2} (\tr L)^2   \right) \\
         & = & v^{\frac{2}{n}}(\tr L)\left(2\tr((L^{(0)})^2)\left(\frac{\dot{u}}{\tr L} -1 + \frac{1}{n} \right)\right) \\
         &=& -2v^{\frac{2}{n}} \tr((L^{(0)})^2) \left(\xi - \frac{1}{n} (\tr L)\right),
\end{eqnarray*}
as required. Note that we used the conservation law (\ref{cons1}) in the penultimate equality.
\end{proof}

\begin{rmk} \label{C3smooth}
The $C^3$ regularity assumption is natural since for a GRS the conservation law (\ref{cons1})
is usually derived from the equation for the Laplacian of $du$, cf Remark 3.8 in \cite{DW1}.
Conversely, as discussed in \S 1, in constructing a complete GRS, this hypothesis allows us
to work with (\ref{soleqS}) and (\ref{cons1})  as long as there is a singular orbit of
dimension strictly smaller than that of a principal orbit.
\end{rmk}

\section{\bf Non-existence results}

Fundamental results of Perelman \cite{Pe} imply that on a closed manifold all Ricci
solitons are of gradient type and all non-trivial solitons are shrinkers.
If the soliton metric is invariant under a compact group of isometries,
then by averaging over the group one obtains a new soliton potential that
is an invariant function. In particular the metric of a compact homogeneous
Ricci soliton must be Einstein. Starting from dimensions  $ \geq 12,$ examples of
compact (simply connected) homogeneous spaces without homogeneous Einstein
metrics are known, cf \cite{WZ} and \cite{BK}. Hence these homogeneous
spaces do not admit any homogeneous soliton structures, trivial or not.

We consider next the compact cohomogeneity one case. In this case, the orbit space is
either a circle or a closed interval. In the former case, the fundamental group
of the manifold is infinite, so by the work of \cite{De}, \cite{FG}
no cohomogeneity one Ricci soliton structures exist. Let us then assume that the
orbit space is a closed interval and the singular orbits have dimension strictly
smaller than that of the principal orbits. B\"ohm showed in \cite{B3} that for certain
principal orbit types $G/K$ there are no compact cohomogeneity one Einstein metrics.
His hypotheses are that there exists a $G$-invariant distribution on the principal
orbits $G/K$ such that:

\medskip
(i) the tracefree Ricci of any $G$-invariant metric on $G/K$
is negative definite on the distribution,

(ii) the distribution is irreducible,

(iii) the distribution has trivial intersection with the vertical
(collapsing) space of the two special orbits.

Many examples of such orbit types were given in \cite{B3}, \S 7.

It turns out that B\"ohm's argument also works in the Ricci soliton case,
as the extra Hessian terms $\dot{u} L$ and $\ddot{u}$ in (\ref{soleqS}), (\ref{soleqN})
do not play an essential role in the proof. For the convenience of the reader,
we give a self-contained proof for the soliton case below. As a consequence,
one concludes that the examples in \cite{B3} also do not admit any non-trivial
$G$-invariant Ricci soliton structures.

Let $\overline{M}$ be a closed manifold with a cohomogeneity one smooth
action by a compact Lie group $G$ such that the orbit space is a finite closed
interval and no singular orbit is exceptional. Let $(\bar{g}, u)$ be a gradient
Ricci soliton structure where both the metric and potential are $G$-invariant.
We may choose a unit speed geodesic $\gamma$ that intersects all principal orbits
orthogonally. Then there are closed subgroups $K \subset H_j \subset G, \ j=1, 2$,
such that $G/K$ is the principal orbit type along the interior of $\gamma$ and $G/H_j$
are the singular orbits at the endpoints of $\gamma$. Assume that the domain of
$\gamma$ is $[0, \tau]$, which may be identified with the orbit space of the
cohomogeneity one $G$-action.

Let us choose an ${\rm Ad}_K$-invariant decomposition $\g = \kf \oplus \p$.
Note that in this section we are not making any assumptions on the
multiplicities of the irreducible $K$-summands in $\p$.
For each of the singular orbits $G/H_j$, $j=1, 2,$ we have a
decomposition $\p = \s_j \oplus \q_j$, where $\h_j \approx \s_j \oplus \kf,$
$\s_j$ is ${\rm Ad}_K$-invariant, and $\q_j$ is ${\rm Ad}_{H_j}$-invariant.
In order to address smoothness issues at $G/H_1$ it is convenient
to fix a $G$-invariant background metric $\beta$ on $G/K$ such that on $\s_1$ it
induces the constant curvature $1$ metric on the sphere $H_1/K$ and on $\q_1$ it
induces a fixed $G$-invariant metric on $G/H_1$. (Such a background metric
will not in general come from a bi-invariant metric on $G$.) Now on the open
subset consisting of principal orbits, one can write the
soliton metric $\bar{g}$ in the form $dt^2 + g_t$ where $g_t$ is a one-parameter
family of invariant metrics on the principal orbit $G/K,$ regarded as ${\rm Ad}_K$-
invariant endomorphisms of $\p$ which are symmetric with respect to the fixed
background metric $\beta$. On the other hand, the Ricci endomorphisms $r_t$ and
shape operators $L_t$ are symmetric with respect to $g_t$ but not necessarily
with respect to $\beta$.

We next decompose $\p$ as a sum
\begin{equation} \label{p-decomp}
\p = \p_1 \oplus \cdots \oplus \p_r
\end{equation}
where each $\p_i$ is itself a sum of {\em equivalent} ${\rm Ad}_K$-irreducible
summands, and, for all $i\neq j$, no summand of $\p_i$ is equivalent to a summand of
$\p_j$. Such a decomposition is unique up to permutation, and the summands $\p_i$ are
orthogonal with respect to $\beta$ as well as $g_t$. Note that any ${\rm Ad}_K$-invariant
$g_t$-symmetric endomorphism of $\p$ must map $\p_i$ into $\p_i$ and we will denote
by $\tr_i$ its trace on the summand $\p_i$. Its full trace on $\p$ will be denoted by
$\tr$ as usual.

Recall that the relative volume $v(t) = \sqrt{\det g_t}$ of the principal orbits
satisfies the equation $\dot{v} = (\tr L)v.$ Consider the conformally related metrics
$${\tilde g} = v^{-\frac{2}{n}} \ g. $$
One easily computes that
\begin{equation} \label{d-gtilde}
 \dot{\tilde g} = 2 {\tilde g} L^{(0)}
\end{equation}
and
\begin{equation} \label{dd-gtilde}
 \ddot{\tilde g} = \dot{\tilde g} {\tilde g}^{-1} \dot{\tilde g}  + 2 \tilde{g} \dot{L^{(0)}},
\end{equation}
where $L^{(0)}= L - (\frac{\tr L}{n}) \ \I $ is the trace-free part of $L$.

Consider the quantities
\begin{equation}  \label{defFi}
    F_i:= \frac{1}{2} \ {\tr}_i({\tilde g}^2).
\end{equation}
Using (\ref{d-gtilde}) and (\ref{dd-gtilde}) we obtain
\begin{equation} \label{d-Fi}
      \dot{F_i} = {\tr}_i ({\tilde g}{\dot{\tilde g}}) = 2 \ {\tr}_i ({\tilde g}^2  L^{(0)}),
\end{equation}
\begin{equation} \label{dd-Fi}
     \ddot{F_i} = {\tr}_i({\dot{\tilde g}}^2) + {\tr}_i({\tilde g}{\dot{\tilde g}}{\tilde g}^{-1}{\dot{\tilde g}})
           + 2\ {\tr_i}({\tilde g}^2 \dot{L^{(0)}}).
\end{equation}

To compute further, we need to break the terms in Eq. (\ref{soleqS}) into
their trace and trace-free parts. Recall that the generalised mean curvature
$\xi$  is given by $-\dot{u} + \tr L.$ Then one obtains, using (\ref{soleqS}),
$$ \dot{L^{(0)}} = r^{(0)} - \xi L^{(0)} + \left(\frac{S}{n} + \frac{\epsilon}{2} -\frac{\tr{\dot{L}}}{n}
             -\left(\frac{\tr L}{n}\right) \xi \right) \ \I $$
where $r^{(0)}$ is the trace-free part of $r$ and $S$ is its trace, i.e., the
scalar curvature. Note that the trace of Eq. (\ref{soleqS}) yields the relation
$$ S  - \tr(\dot{L}) -\xi \ \tr{L} + \frac{n\epsilon}{2} = 0.$$
Hence we obtain
\begin{equation} \label{d-tracelessL}
   \dot{L^{(0)}} + \xi L^{(0)} - r^{(0)} = 0.
\end{equation}
Substituting this into (\ref{dd-Fi}) and using (\ref{d-Fi}) we finally get
\begin{equation} \label{eqnFi}
  \ddot{F_i} + \xi \dot{F_i} = {\tr}_i({\dot{\tilde g}}^2) + {\tr}_i({\dot{\tilde g}}{\tilde g}^{-1}{\dot{\tilde g}}{\tilde g})
             + 2\ {\tr}_i({\tilde g}^2 r^{(0)}),
\end{equation}
which is an analogue of the formula in Proposition 3.2 in \cite{B3}.

We are now ready to prove

\begin{thm} \label{nonexistence}
Let $\overline{M}$ be a closed cohomogeneity one $G$-manifold as described above. Assume
that some summand $\p_{i_0}$ in $($\ref{p-decomp}$)$ is actually ${\rm Ad}_K$-irreducible
and that for any $G$-invariant metric on $G/K$, the restriction to $\p_{i_0}$
of its traceless Ricci tensor is always negative definite. Assume further that
$\p_{i_0} \cap \s_j = \{0\}$ for $j=1, 2.$

Then there cannot be any $G$-invariant gradient Ricci soliton structure on
$\overline{M}$.
\end{thm}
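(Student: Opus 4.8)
The plan is to adapt B\"ohm's argument by studying the single scalar function $F_{i_0}$ defined in (\ref{defFi}), exploiting the fact that the soliton modifications enter the key identity (\ref{eqnFi}) only through the term $\xi\,\dot{F}_{i_0}$, which will be killed automatically at the critical point we produce. Thus the soliton and Einstein cases are treated on an equal footing.

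First I would use the hypothesis that $\p_{i_0}$ is ${\rm Ad}_K$-irreducible. By Schur's lemma every ${\rm Ad}_K$-invariant symmetric endomorphism of $\p$ restricts to $\p_{i_0}$ as a scalar, so $\tilde g|_{\p_{i_0}} = \phi\,\I$ for a positive function $\phi(t)$, and likewise $r^{(0)}$ and $L^{(0)}$ restrict to scalars there. Writing $d_{i_0}=\dim\p_{i_0}$ we then have $F_{i_0} = \tfrac12 d_{i_0}\phi^2$. This is precisely where irreducibility (rather than mere isotypicality of $\p_{i_0}$) is essential: at a critical point of $F_{i_0}$ one has $\dot\phi=0$, hence $\dot{\tilde g}|_{\p_{i_0}}=0$ by (\ref{d-gtilde}), so that the two nonnegative terms $\tr_{i_0}(\dot{\tilde g}^2)+\tr_{i_0}(\dot{\tilde g}\tilde g^{-1}\dot{\tilde g}\tilde g)$ on the right of (\ref{eqnFi}) vanish. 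Were $\p_{i_0}$ only a sum of equivalent irreducibles, $\dot{F}_{i_0}=0$ would not force $\dot{\tilde g}|_{\p_{i_0}}=0$, and these terms (both positive) would push $\ddot{F}_{i_0}$ in the wrong direction and obstruct the argument.

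Next I would analyse the boundary behaviour at the two singular orbits, which is where the assumption $\p_{i_0}\cap\s_j=\{0\}$ is used. Since the collapsing directions at $G/H_j$ all lie in $\s_j$ and $\p_{i_0}$ meets $\s_j$ trivially, the restriction of $g_t$ to $\p_{i_0}$ stays bounded away from $0$ as $t\to 0^+$ and as $t\to\tau^-$, while $v=\sqrt{\det g_t}\to 0$ because the $\s_j$-block collapses. Hence $\tilde g|_{\p_{i_0}}=v^{-2/n}g|_{\p_{i_0}}$ blows up, so $\phi\to+\infty$ and therefore $F_{i_0}\to+\infty$ at both endpoints. Consequently $F_{i_0}$, smooth and positive on $(0,\tau)$, attains an interior minimum at some $t_0$, at which $\dot{F}_{i_0}(t_0)=0$ and $\ddot{F}_{i_0}(t_0)\ge 0$.

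The contradiction is then immediate. At $t_0$, formula (\ref{eqnFi}), combined with the vanishing of its first two terms and of $\xi\,\dot{F}_{i_0}$, reduces to $\ddot{F}_{i_0}(t_0)=2\,\tr_{i_0}(\tilde g^2 r^{(0)})(t_0)$. Since $\tilde g^2$ is positive definite on $\p_{i_0}$ while, by hypothesis, the traceless Ricci $r^{(0)}$ restricts to a negative-definite endomorphism of $\p_{i_0}$ for the metric $g_{t_0}$, the right-hand side is strictly negative; thus $\ddot{F}_{i_0}(t_0)<0$, contradicting minimality. Hence no $G$-invariant gradient Ricci soliton structure can exist. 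I expect the main technical point to be the rigorous verification of the endpoint behaviour $F_{i_0}\to+\infty$: one must check, from the smoothness conditions at a special orbit of codimension $\ge 2$, that the determinant collapse $v\to 0$ is not compensated by the noncollapsing $\p_{i_0}$-block, which is exactly what $\p_{i_0}\cap\s_j=\{0\}$ guarantees; everything else follows formally from (\ref{eqnFi}) and irreducibility.
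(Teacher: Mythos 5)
Your proposal is correct and follows essentially the same route as the paper: blow-up of $F_{i_0}$ at both endpoints (from $v\to 0$ together with $\p_{i_0}\cap\s_j=\{0\}$ and irreducibility), an interior minimum where $\dot{\tilde g}|_{\p_{i_0}}=0$, and then a sign contradiction in (\ref{eqnFi}) from the negative definiteness of $r^{(0)}$ on $\p_{i_0}$. Your explicit use of Schur's lemma to reduce $\tilde g|_{\p_{i_0}}$ to a scalar, and your remarks on exactly where irreducibility and the transversality hypothesis enter, only make explicit what the paper's proof uses implicitly.
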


\begin{proof} (cf \cite{B3})
Consider the function $F_{i_0}$ given by (\ref{defFi}). We claim that as $t$ approaches
$0$, $F_{i_0}(t)$ approaches $+\infty$. This follows since $v$ tends to zero
(the singular orbit $G/H_1$ has strictly smaller dimension than the principal
orbits), and the irreducibility of $\p_{i_0}$ together with the fact that
$\p_{i_0} \cap \s_1 = \{0\}$ imply that the endomorphism $g| \p_{i_0}$ tends
to a multiple of the identity. Similarly, we see that $F_{i_0}(t)$ tends to
$+\infty$ as $t$ tends to $\tau$. (Note that $F_{i_0}$ depends on the background
metric, but the functions defined using different backgrounds differ only by
positive constants, as $\p_{i_0}$ is irreducible.)

It follows that $F_{i_0}$ has a global minimum at some point $t_* \in (0, \tau)$.
By (\ref{d-Fi}), $\dot{\tilde g}(t_*)$ restricts to zero on $\p_{i_0}$, and using
this in (\ref{eqnFi}) with the negative definiteness of $r^{(0)}$ on $\p_{i_0},$
we arrive at a contradiction.
\end{proof}

\begin{rmk} \label{nonexamples}
Examples of cohomogeneity one manifolds which satisfy the hypotheses of Theorem
\ref{nonexistence} include $S^{k+1} \times (G^{\prime}/K^{\prime}) \times M_3 \times \cdots \times M_r$
where $M_3, \cdots, M_r$ are arbitrary compact isotropy irreducible homogeneous spaces,
$G^{\prime}/K^{\prime}= {\rm SU}(\ell + m)/({\rm SO}(\ell) {\rm U}(1) {\rm U}(m))$
(a bundle over a complex Grassmannian with a symmetric space as fibre), and
$\ell \geq 32, m=1, 2, k=1, \ldots, [\ell/3]$  (cf \cite{B3}). The significance
of the spaces $G^{\prime}/K^{\prime}$ is that they do not admit any $G^{\prime}$-invariant
Einstein metrics (cf \cite{WZ}). More information about the structure of such
$G^{\prime}/K^{\prime}$ can be found in \cite{B4}, cf Theorem B in particular.
\end{rmk}

\section{\bf Multiply warped principal orbit types}

In the final section \S 5 we are going to conduct numerical
investigations of the soliton equations for certain relatively simple
orbit types, which in the Einstein case were studied by B\"ohm \cite{B1}.
In the current section we shall therefore see how the soliton equations
simplify under certain assumptions. We shall in particular make some remarks
about the case of multiple warped products, which have some special features
(some of the B\"ohm examples are warped products with two factors).

Let us first assume that the isotropy
representation is {\em multiplicity free}, that is, $\p$ decomposes into pairwise
inequivalent real $K$-modules $\p_1 \oplus \ldots \oplus \p_m$. We let $d_i$
denote the real dimension of $\p_i$ and let $n= d_1 + \ldots + d_m$. Note that
the summands $\p_i$ have a different meaning than that in \S 3.

Under the multiplicity free assumption, both the metric and shape operator
diagonalise with respect to the decomposition $\p_1 \oplus \ldots \oplus \p_m$.
We write the components as $g_i^2$ and  $L_i =\frac{\dot{g_i}}{g_i}$ respectively.
Furthermore, the Ricci term $r_t$ also diagonalises with components $r_i$.

In addition to the variable $W$ defined by (\ref{def-W}), we introduce variables
\begin{eqnarray}
X_i &=& \frac{\sqrt{d_i}}{( -\dot{u} + {\tr} L)} \frac{\dot{g_i}}{g_i} \label{def-Xi},\\
Y_i &=& \frac{\sqrt{d_i}}{g_i} \frac{1}{(-\dot{u} + \tr{L})} \label{def-Yi}.
\end{eqnarray}
Note that the non-negative quantity $\mathcal G = W^2 \tr(L^2)$ at the end of \S 1
becomes $ \sum_{i=1}^{m}  X_i^2 $ and ${\mathcal H} = W \tr L$ becomes
$\sum_{i=1}^m \sqrt{d_i} X_i$.

The gradient Ricci soliton system (\ref{soleqS})-(\ref{soleqN}) becomes the following equations
\begin{eqnarray}
X_{i}^{\prime} &=& X_i \left( \sum_{j=1}^{m} X_j^2 -1 \right) +
\sqrt{d_i} \, r_i W^2 + \frac{\epsilon}{2} \left( \sqrt{d_i}-X_i \right)
W^2 \, , \label{eqnX} \\
Y_{i}^{\prime} &=&  Y_i \left( \sum_{j=1}^{m} X_j^2 -\frac{X_i}{\sqrt{d_i}}
-\frac{\epsilon}{2}W^2 \right) \, , \label{eqnY} \\
W^{\prime} &=& W \left( \sum_{j=1}^{m} X_j^2 - \frac{\epsilon}{2}W^2  \right).
\label{eqnW}
\end{eqnarray}
Note that (\ref{soleq-mixed}) is automatically satisfied under the multiplicity free
hypothesis (cf Remark 2.21 in \cite{DW1}).

The  term involving $r_i$ in (\ref{eqnX}) can be expressed as rational
functions of the $Y_j$. Without loss of generality we can take $W$ to be positive
(before the blow-up time).

In the above variables,  the conservation law (\ref{ham}) becomes:
\begin{equation} \label{cons2}
\Ly +  \left(\frac{n-1}{2} \right)\epsilon W^2 = (C + \epsilon u)W^2 \ \ (= \E W^2)
\end{equation}
where $C$ is a constant and
\begin{equation} \label{Ly}
\Ly := \frac{{\rm tr} (L^2)}{(-\dot{u} + {\rm tr} L)^2}
+ \frac{{\rm tr}(r_t)}{(-\dot{u} + {\rm tr} L)^2}  -1.
\end{equation}
In the steady case $(\epsilon=0)$, we recall that $\Ly$ is a Lyapunov
function for the flow, but this fails for $\epsilon$ nonzero.
The first term in $\Ly$ is just ${\mathcal G}=\sum_{i=1}^{m} X_i^2$.

\bigskip
Let us now focus on the multiple warped product situation (see \cite{DW2}
for the steady and \cite{DW3} for the expanding case). Here
the hypersurface is a product of Einstein manifolds with positive Einstein
constants $\lambda_i$. Now $r_i = \frac{\lambda_i}{g_i^2}$ so the
term involving $r_i$ in the equation for $X_i$ is just $Y_i^2$ (up to
a positive multiplicative constant).

Moreover,  we have ${\rm tr}(r_t)= S > 0$ and it follows from (\ref{def-Xi}) and (\ref{def-Yi}) that
\[
\Ly = \sum_{i=1}^{m} (X_i^2 + Y_i^2) - 1,
\]
so $\Ly$ is bounded below. This can be used to give more precise
information about the flow, especially in the neighbourhood of a turning point.
By contrast, for more general orbit types, the ${\rm tr}(r_t)$
term and the functional $\Ly$ involve $\frac{Y_k^2 Y_p^2}{Y_j^2}$  terms.
Moreover $\Ly$ is indefinite.

\medskip
In the multiple warped product case we can sharpen the analysis of
the functional $\mathscr F$ that was introduced in \S 2. As we are dealing
with shrinking solitons, $\epsilon$ is negative.

\begin{prop} \label{Fincreases}
Let $\gamma(t)$ denote a solution trajectory of the GRS equations in the multiple
warped product case. Assume that the soliton is complete. Then the quantity
$\eta:= \xi - \frac{1}{n} \tr \,L$ eventually becomes and stays negative. Hence
${\mathscr F}$ eventually is non-decreasing, and is then strictly increasing
unless the shape operator $L$ becomes a multiple of the identity or we reach
a second singular orbit.
\end{prop}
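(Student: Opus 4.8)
The plan is to reduce everything to the sign of $\eta = \xi - \frac{1}{n}\tr L$ and to exploit the reformulation
\[
\eta = \frac{n-1}{n}\,\xi - \frac{1}{n}\,\dot{u},
\]
which follows at once from $\tr L = \xi + \dot{u}$ (that is, from (\ref{xi})). Granting that $\eta$ is eventually negative, the conclusions about ${\mathscr F}$ are immediate from the Lyapunov identity of Proposition \ref{Lyapunov}: since $\dot{\mathscr F} = -2 v^{\frac{2}{n}}\tr((L^{(0)})^2)\,\eta$ with $v^{\frac{2}{n}} > 0$ and $\tr((L^{(0)})^2) \geq 0$, we obtain $\dot{\mathscr F} \geq 0$ wherever $\eta < 0$, with strict inequality exactly when $\tr((L^{(0)})^2) > 0$, i.e. when $L$ is not a multiple of the identity. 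So the whole content is to show that $\eta$ eventually becomes and stays negative, which I would do by treating the non-compact and compact cases separately.

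For the non-compact case I would invoke only facts already recorded for complete shrinkers. From \S 1, $\xi$ decreases strictly from $+\infty$ to $-\infty$ with its unique zero at the turning point $t_*$; in particular $\xi < 0$ for all $t > t_*$. From Proposition \ref{noncompactE}(b) and its proof there is a $t_1$ beyond which $\dot{\mathcal E} < 0$; since $\E = C + \epsilon u$ and $\epsilon < 0$, this says $\dot{u} > 0$ for $t \geq t_1$. Hence for $t > \max(t_*, t_1)$ both terms of $\eta = \frac{n-1}{n}\xi - \frac{1}{n}\dot{u}$ are strictly negative (note $n \geq 2$, so $\frac{n-1}{n} > 0$), and therefore $\eta < 0$. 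No further estimate is needed here.

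The compact case is the delicate one, and I expect it to be the main obstacle, because of the apparent indeterminacy in the original form $\eta = \xi - \frac{1}{n}\tr L$ as one approaches the second singular orbit at $t = T$: with the outward-normal convention the smoothness conditions give $\tr L \sim -\frac{k'}{T-t} \to -\infty$ (where $S^{k'}$, $k' \geq 1$, is the collapsing fibre), so that $\xi \to -\infty$ while $-\frac{1}{n}\tr L \to +\infty$. The reformulation resolves this cleanly: $u$ extends smoothly to the compact manifold, so $\dot{u}$ is bounded near $t = T$ (indeed $\dot{u}(T) = 0$), whence $\eta = \frac{n-1}{n}\xi - \frac{1}{n}\dot{u} \sim \frac{n-1}{n}\,\xi \to -\infty$. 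Thus $\eta < 0$ on some interval $[t_0, T)$, which is precisely the claim as we reach the second singular orbit.

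It then remains only to assemble the conclusion. On the range where $\eta < 0$ the Lyapunov identity shows ${\mathscr F}$ is non-decreasing, and it is strictly increasing except at points where $L^{(0)} = 0$, i.e. where $L$ is a multiple of the identity; in the compact case the trajectory in addition terminates at the second singular orbit $t = T$, which accounts for the corresponding exception in the statement. The only inputs are completeness---used to guarantee that the trajectory extends to $t \to +\infty$, or smoothly up to $t = T$---together with the global behaviour of $\xi$ and the sign and boundedness of $\dot{u}$ established in \S 1 and \S 2.
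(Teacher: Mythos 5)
Your proof is correct, but the key step---showing that $\eta$ eventually becomes and stays negative in the non-compact case---is handled by a genuinely different route from the paper's. The paper first proves that the region $\{\eta<0\}$ is invariant under the forward flow: at a first exit time $t_1$ it computes $\dot\eta(t_1)=-\frac{1}{n}S(t_1)+\bigl(\frac{(\tr L)^2}{n^2}-\tr(L^2)\bigr)(t_1)\le-\frac{1}{n}\bigl(S+\frac{n-1}{n}(\tr L)^2\bigr)(t_1)<0$, using $S>0$ (the only place the warped-product hypothesis enters); it then rules out $\eta>0$ for all $t$ because that would give $\frac{1}{n}\tr L<\xi\le\frac{\epsilon}{2}t+a$, hence finite volume, contradicting the Cao--Zhu theorem. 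You instead write $\eta=\frac{n-1}{n}\xi-\frac{1}{n}\dot u$ and combine two facts already on record: $\xi<0$ past the turning point, and $\dot{\mathcal E}<0$ (i.e.\ $\dot u>0$) for large $t$ from the proof of Proposition \ref{noncompactE}, which rests on the Cao--Zhou potential estimate. Both routes import one external theorem (Cao--Zhu on volume growth versus Cao--Zhou on potential growth), but yours bypasses the invariance computation and never uses $S>0$, so it in fact establishes the conclusion for general principal orbit types---exactly the generalization that the remark following the paper's proof identifies as hinging on $S(t_1)>0$. What you give up is the dynamical content of the invariance statement: the paper's argument shows $\mathscr F$ is monotone from the first instant $\eta$ turns negative, which may be well before your $\max(t_*,t_1)$. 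Your compact case and the reduction of the $\mathscr F$ assertions to Proposition \ref{Lyapunov} coincide with the paper's.
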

\begin{proof} We shall assume that our soliton is non-compact; otherwise,
$\dot{u}$ is close to zero and $\tr \,L$ is close to $-\infty$ near $t=T$,
so that all the assertions hold by Proposition \ref{Lyapunov}.

Let us consider the region $\{\eta < 0\}$. We claim that it is invariant
under the forward flow of the GRS equations. By abuse of notation suppose
$\gamma(t)$ is a trajectory of the flow starting at $t_0$ with $\eta(\gamma(t_0))) < 0.$
Let $t_1 > t_0$ be the first time when $\gamma$ leaves the region $\{\eta< 0\}$.
Then
$$\xi(t_1) = \frac{1}{n} (\tr \,L)|_{t=t_1}.   $$
On the other hand
\begin{eqnarray*}
  \dot{\eta} &=& \dot{\xi} - \frac{1}{n} \tr(\dot{L}) \\
       &=& - \tr(L^2) - \frac{S}{n} + \frac{1}{n} \ \xi \ \tr \,L,
\end{eqnarray*}
where we have used (\ref{soleqN}) and the trace of (\ref{soleqS}). At $t=t_1$ we have
\begin{eqnarray*}
 \dot{\eta}(t_1) & = & -\frac{1}{n} S(t_1) + \left(\frac{(\tr \,L)^2}{n^2} - \tr(L^2) \right)(t_1) \\
      & \leq & -\frac{1}{n} \left(S + \frac{n-1}{n} (\tr \,L)^2 \right)(t_1) < 0  \\
\end{eqnarray*}
since $S(t_1) > 0$. This means that $\eta(\gamma(t))$ must be positive slightly to the left
of $t=t_1$, a contradiction to the choice of $t_1$.

It remains to see that $\eta(\gamma(t))$ is negative somewhere. The previous
paragraph implies that once $\eta(\gamma(t_1))$ is zero for some $t_1$, it must
be negative to the right of $t_1$. So suppose that $\eta(\gamma(t)) > 0$ for all $t$.
Then by (\ref{soleqN}),
$$ \frac{\dot{v}}{nv} = \frac{1}{n} (\tr \,L) < \xi \leq \frac{\epsilon}{2}t + a $$
for some positive constant $a$ for large values of $t$. It follows that the
cohomogeneity one manifold has finite volume, contradicting a theorem of H.-D. Cao
and X.-P. Zhu (\cite{Ca3}, Theorem 3.1).
\end{proof}

\begin{rmk} In the above proof the only time we used the hypothesis that the
hypersurface is of multiple warped product type is when we asserted $S(t_1)> 0$.
\end{rmk}

\begin{prop} \label{Fbound}
Let the hypersurface $($principal$)$ orbit be a product $(M_1, h_1) \times \dots \times
(M_m, h_m)$ of Einstein manifolds with dimension $d_i$ and ${\rm Ric}(h_i) = \lambda_i h_i, \lambda_i > 0$.
Assume further that $M_1 = S^{k+1}$ and $h_1$ is the constant curvature $1$ metric.
Then the Lyapunov function $\mathscr F$ satisfies
$$ {\mathscr F} \geq n \prod_{i=1}^m  \ \lambda_i^{d_i/n}. $$
Equality holds for a trajectory of the GRS equations iff $g_i/\sqrt{\lambda_i}$ is
independent of $i \  (1 \leq i \leq m)$ and the traceless shape operator $L^{(0)}$ is $0$.
\end{prop}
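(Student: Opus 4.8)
The plan is to exploit the structure ${\mathscr F} = v^{2/n}\left(S + \tr((L^{(0)})^2)\right)$ from (\ref{def-F}) and to separate the two sources of the inequality at once. Since $v>0$ on the interior and $\tr((L^{(0)})^2)\geq 0$ always, one has immediately
$$ {\mathscr F} \geq v^{2/n} S, $$
with equality exactly when $L^{(0)}=0$. This reduces the problem to the sharp lower bound $v^{2/n} S \geq n\prod_i \lambda_i^{d_i/n}$ and disposes of the second equality condition at the outset; the first equality condition will then fall out of the bound on $v^{2/n} S$ itself.

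Next I would compute the two ingredients explicitly in the multiply warped product case. Writing the principal orbit metric as $\bigoplus_i g_i^2 h_i$ and measuring $v=\sqrt{\det g_t}$ relative to the fixed background $\bigoplus_i h_i$ (the natural normalization, consistent with the choice of $\beta$ in \S 3, which makes the stated constant come out cleanly), one gets $v = \prod_i g_i^{d_i}$, hence $v^{2/n} = \prod_i g_i^{2d_i/n}$. Since the orbit is a Riemannian product and $r_i = \lambda_i/g_i^2$, the scalar curvature is $S = \tr(r_t) = \sum_i d_i \lambda_i / g_i^2$. Therefore
$$ v^{2/n} S = \sum_i d_i \, \frac{\lambda_i}{g_i^2}\, \prod_j g_j^{2d_j/n}. $$

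The heart of the argument is then a single application of the weighted arithmetic--geometric mean inequality with weights $w_i = d_i/n$ (which sum to $1$ since $n=\sum_i d_i$), applied to the positive quantities $a_i := \lambda_i g_i^{-2}\prod_j g_j^{2d_j/n}$. The arithmetic side is exactly $\tfrac1n v^{2/n} S$, while on the geometric side the powers of the $g_j$ telescope:
$$ \prod_i a_i^{d_i/n} = \big(\prod_i \lambda_i^{d_i/n}\big)\,\big(\prod_j g_j^{2d_j/n}\big)^{\sum_i d_i/n}\,\prod_i g_i^{-2 d_i/n} = \prod_i \lambda_i^{d_i/n}, $$
where $\sum_i d_i/n = 1$ forces the two remaining $g$-factors to cancel. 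This yields $v^{2/n} S \geq n\prod_i \lambda_i^{d_i/n}$ and hence the claimed bound for ${\mathscr F}$.

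Finally I would read off the equality case. Equality in the AM--GM step holds iff all the $a_i$ are equal, i.e. $\lambda_i/g_i^2$ is independent of $i$, which is exactly the condition that $g_i/\sqrt{\lambda_i}$ be independent of $i$; combined with the condition $L^{(0)}=0$ from the first reduction, this gives the stated characterization. I do not expect a serious obstacle here, since the proof is essentially one clean inequality. The only points requiring care are pinning down the normalization of $v$ so that the multiplicative constant is \emph{exactly} $n\prod_i\lambda_i^{d_i/n}$ rather than a background-dependent multiple, and choosing the AM--GM weights $d_i/n$ correctly so that the geometric mean collapses to a product of the $\lambda_i$ alone.
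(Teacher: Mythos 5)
Your proof is correct and follows essentially the same route as the paper: drop the nonnegative $\tr((L^{(0)})^2)$ term, compute $v^{2/n}S$ explicitly for the product metric, and apply the weighted AM--GM inequality with weights $d_i/n$ to the quantities $\lambda_i/g_i^2$ (you merely fold the common factor $\prod_j g_j^{2d_j/n}$ into the $a_i$ before applying the inequality, which changes nothing). The equality analysis also matches the paper's.
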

\begin{proof} By (\ref{def-F}), we have
\begin{eqnarray*}
  {\mathscr F}  & \geq & v^{\frac{2}{n}}\ S = \left(\prod_{i=1}^m g_i^{\frac{2d_i}{n}} \right)
    \sum_{j=1}^m \ \frac{d_j\lambda_j}{g_j^2}  \\
    &\geq &  \left(\prod_{i=1}^m g_i^{\frac{2d_i}{n}}\right)
    n \prod_{i=1}^m \left(\frac{\lambda_i}{g_i^2} \right)^{\frac{d_i}{n}}  \\
    &=& n \prod_{i=1}^m  \ \lambda_i^{d_i/n},
\end{eqnarray*}
where we used the inequality (cf \cite{HLP}, p. 17)
$$ \prod_{i=1}^m a_i^{\frac{\rho_i}{\sum  \rho_j}} \leq \frac{\sum \rho_j a_j}{\sum \rho_j} $$
for $a_i, \rho_i \geq 0$. Equality holds in this inequality iff all the $a_i$ are equal,
which translates into our condition since we took $a_j = \lambda_j/g_j^2$ and $\rho_j= d_j$.

\end{proof}

Let us consider the $3$-dimensional subset of phase space
$$ \mathscr D :=\{ (g, L, \dot{u}): L^{(0)} = 0, \ g_i/\sqrt{\lambda_i} \ \mbox{\rm independent of } i \}. $$
By Proposition \ref{Fbound}, $\mathscr D$ is the set on which the function
$\mathscr F$ attains its (global) minimum value. It is invariant under the flow of
the GRS equations. In fact, if we parametrize this set by $x= g_i \sqrt{\frac{n-1}{\lambda_i}},
y=\dot{x}/x,$ and  $z=\dot{u}$, then the induced flow on $\mathscr D$ is
precisely the GRS equations for the special case $m=1, d_1 = n,$ which was analysed
by Kotschwar \cite{Kot}. Recall that Kotschwar showed that only two trajectories
in $\mathscr D$ represented complete smooth solitons with at least one singular
orbit (actually a point). However, when $m>1$, no trajectory lying in $\mathscr D$
represents a smooth soliton. When $m>1$, the non-compact solution acquires a conical
singularity and will be referred to as the {\em conical} Gaussian soliton. The compact
solution has two conical singularities and will be called the {\em spherical Einstein cone}.
On the other hand, there is a smooth non-compact complete solution which is the
product of the shrinking Gaussian soliton on $\R^{d_1+1}$ with the remaining
Einstein factors. (This does not represent a trajectory in $\mathscr D$).
We will call this solution the {\em smooth Gaussian soliton}, which is {\em rigid}
in the terminology of Petersen-Wylie \cite{PW}. For comparison purposes we list below
some information about these special solutions.

\begin{example} \label{SG} ({\em smooth Gaussian} on $\R^{d_1+1} \times M_2 \times \cdots \times M_m$)
The metric is given by
$$dt^2 + t^2 h_1 + \left(\frac{2\lambda_2}{-\epsilon}\right) \ h_2
    + \cdots + \left(\frac{2\lambda_m}{-\epsilon}\right) \ h_m$$
and $u(t) = -\frac{\epsilon}{4}t^2$. We further have
$$ \tr L = \frac{d_1}{t},  \ \
     \xi = \frac{\epsilon}{2}t + \frac{d_1}{t}, \ \
     {\mathcal E} =  -\frac{\epsilon^2}{4}t^2 -\frac{\epsilon}{2}(d_1 + 1). $$
\end{example}
\begin{example} ({\em conical Gaussian}) \label{CG}
The metric is given by
$$ dt^2 + t^2 \left( \frac{\lambda_1}{n-1}\ h_1 + \cdots + \frac{\lambda_m}{n-1}\ h_m   \right)$$
and $u(t) = -\frac{\epsilon}{4}t^2$. A conical singularity occurs at $t=0$ except when
$m=1$. The metric is Ricci-flat and complete at infinity. We further have
$$  \tr L = \frac{n}{t}, \ \
      \xi = \frac{\epsilon}{2}t + \frac{n}{t}, \ \
      {\mathcal E} = -\frac{\epsilon^2}{4} t^2 - \frac{\epsilon}{2}(n+1).   $$
\end{example}
\begin{example} ({\em spherical Einstein cone}) \label{SEC}
The metric, which is Einstein, is given by
$$ dt^2 + \left(\frac{\sin^2(\alpha t)}{{\alpha^2}(n-1)}\right)
     \left(\lambda_1  h_1 + \cdots + \lambda_m  h_m \right) $$
where $\alpha=\sqrt{-\epsilon/2n}$. We also have $ \xi = \tr L = \alpha n \cot(\alpha t). $
When $m>1$ there are conical singularities at $t=0$ and $t=\pi/\alpha$.
\end{example}

\bigskip
Let us now further specialise to the case $m=2$. This is of particular importance
because it includes some of the examples for which B\"ohm \cite{B1} was able to
produce infinite families of Einstein metrics.

We have found, in \S 1, a general winding number for the flow, representing winding
round the submanifold of Einstein trajectories. We shall now, in the case of warped
products on $2$ factors, investigate another winding number, similar to the one used
by B\"ohm in the Einstein case (cf \cite{B1}). This
counts winding around the subset $\mathscr D$ of phase space, which we introduced above.
We saw that trajectories in $\mathscr D$ represent solutions
which are equivalent to
warped products on {\em one} factor.

For convenience, we shall take the
Einstein constants $\lambda_i$ to be $d_i - 1$.
We recall that $\mathscr D$ is defined by
\[
\frac{X_1}{\sqrt{d_1}} = \frac{X_2}{\sqrt{d_2}} \;\; : \;\;
\frac{Y_1}{\sqrt{d_1}} = \frac{Y_2}{\sqrt{d_2}}.
\]
Note in particular that the intersection of $\mathscr D $ with the region
$\{\Ha=1, \Qa=0\}$ (cf. Remark \ref{HQ}) consists (with $\epsilon$ normalised to $-2n$)
of the curve parametrised by
$$X_i = \frac{\sqrt{d_i}}{n},\ \  Y_i = \frac{\sqrt{d_i} \sqrt{n-1}}{n} \sec t,
\ \ W= \frac{1}{n} \tan t.$$
This is the spherical Einstein cone solution in (\ref{SEC}), with $\alpha =1$.

\medskip

The flow-invariant subvariety $\mathscr Z$ of phase space given by $\Ha=1, \Qa=0, W=0$
is transverse to $\mathscr D$. More precisely, this subvariety meets $\mathscr D$ only
at the points $P_{\pm}$ given by
\[
X_i = \frac{\sqrt{d_i}}{n} \;\; :
Y_i = \pm \frac{\sqrt{d_i} \sqrt{n-1}}{n}.
\]
 Note that $P_{+}$ is the initial point (i.e. the value at $t=0$)
of the spherical cone and Gaussian soliton solutions. In fact
$P_{+}$ is a critical point of our equations (recall $t=0$ corresponds
to $s=-\infty$).

Let us now linearise our equations around $P_{+}$ in the invariant
subvariety $\mathscr Z$. This means we only consider vectors tangent to
$\mathscr Z$, so we have
\[
w=0 : \sqrt{d_1} x_1 + \sqrt{d_2} x_2 =0 = \sqrt{d_1} y_1 + \sqrt{d_2} y_2
\]
(the final equation follows from the condition $d\Qa=0$). Parametrising the
tangent space by $x_1, y_1$, we obtain the system
\[
\left( \begin{array}{c}
x_1 \\
y_1
\end{array} \right)^{\prime}=
\left( \begin{array}{cc}
\frac{1-n}{n} & \frac{2 \sqrt{n-1}}{n} \\
-\frac{\sqrt{n-1}}{n} & 0
\end{array}
\right)
\left( \begin{array}{c}
x_1 \\
y_1
\end{array} \right).
\]
The eigenvalues are roots of the quadratic
\[
\lambda^2 + \frac{n-1}{n} \lambda + \frac{2(n-1)}{n^2}
\]
which has discriminant $\frac{(n-1)(n-9)}{n^2}$.
We deduce
\begin{prop}
$P_{+}$ is a focus of the flow on the invariant variety $\mathscr Z$ transverse
to $\mathscr D$ if and only if $2 \leq n \leq 8$.
\end{prop}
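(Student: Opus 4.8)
The plan is to invoke the standard dictionary between the local phase portrait of a planar flow at a critical point and the eigenvalues of its linearisation: the critical point is a \emph{focus} exactly when the two eigenvalues form a complex-conjugate pair with nonzero real part, so that nearby trajectories spiral. Since the linearisation of the flow on $\mathscr Z$ at $P_{+}$ has already been written down as the displayed $2 \times 2$ matrix, the whole assertion reduces to deciding for which $n$ that matrix has genuinely complex eigenvalues.

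First I would read off the characteristic data directly from the matrix. Its trace is $\frac{1-n}{n}$ and its determinant is $\frac{2(n-1)}{n^2}$, which reproduces the quadratic $\lambda^2 + \frac{n-1}{n}\,\lambda + \frac{2(n-1)}{n^2}$ recorded above, with discriminant $\Delta = \frac{(n-1)(n-9)}{n^2}$. The eigenvalues are then a complex-conjugate pair if and only if $\Delta < 0$.

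Next I would determine the sign of $\Delta$. Because the principal orbit is here a product of two Einstein factors, we have $n = d_1 + d_2 \geq 2$, so $n-1 > 0$ and $n^2 > 0$; hence the sign of $\Delta$ is precisely that of $n-9$. Therefore $\Delta < 0$ exactly when $n < 9$, that is (for integer $n \geq 2$) exactly when $2 \leq n \leq 8$, while $n \geq 9$ gives $\Delta \geq 0$ and real eigenvalues, which cannot produce a focus.

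The computation is entirely elementary, so the one point that requires care — and the step I would flag as the real content — is confirming that complex eigenvalues genuinely yield a focus and not a centre. For this I note that the common real part of the eigenvalues equals $\frac{1}{2}\tr = -\frac{n-1}{2n}$, which is nonzero (indeed strictly negative) for every $n \geq 2$. Thus throughout the range $2 \leq n \leq 8$ the eigenvalues are complex with strictly negative real part, so $P_{+}$ is a genuine (stable) focus; and this is exactly the place where the standing hypothesis $n > 1$ is needed.
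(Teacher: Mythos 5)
Your proposal is correct and follows essentially the same route as the paper: the linearisation at $P_{+}$ on $\mathscr Z$ is already displayed, and the paper likewise passes to the characteristic quadratic $\lambda^2 + \frac{n-1}{n}\lambda + \frac{2(n-1)}{n^2}$ and reads the conclusion off the sign of the discriminant $\frac{(n-1)(n-9)}{n^2}$. Your extra check that the common real part $-\frac{n-1}{2n}$ is nonzero (so one gets a genuine focus rather than a centre) is a small but welcome point of care that the paper leaves implicit.
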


So this winding number behaves in just the same way as its analogue in
the Einstein case in \cite{B1}, where it was used to produce symmetric metrics
(cf. remarks after equation (\ref{Weqn})).
However, to produce soliton analogues
of these B\"ohm metrics one would need to control $\dot{u}$ as well, probably
by means of the winding number of \S 1, and as we have seen this has quite
different behaviour. In the next section we shall investigate these and other
examples of B\"ohm numerically in the soliton case.

\section{\bf Numerical investigations}

The  non-existence results of \S 3 rule out certain special principal
orbit types but leave the existence problem open for most other orbit types.
In particular they do not apply to the examples considered by B\"ohm in \cite{B1},
who produced infinite families of cohomogeneity one Einstein metrics on certain
manifolds with low dimensions ($5 \leq n+1 \leq 9$).

In this section we report on some attempts to investigate the existence of
cohomogeneity one shrinking Ricci solitons using numerical methods, focusing
on the B\"ohm spaces and some related examples. The numerical methods employed
are relatively simple, but do show some interesting results.

We first consider an example that {\em is} known to admit a non-trivial
(i.e. non-Einstein) shrinking Ricci soliton, namely
 $\mathbb{CP}^{2}\sharp \,\overline{\mathbb{CP}}^{2}$.
The soliton in this case is a $U(2)$-invariant K\"ahler metric that was
constructed independently by Koiso \cite{Koi} and Cao \cite{Ca1}.
This metric provides an important test case for any ``soliton-hunting" programs,
and we shall henceforth refer to it as the Koiso-Cao soliton.

For the more complicated spaces considered by B\"ohm we are able to replicate
his Einstein metrics from the numerical search, but do not find any examples
admitting non-trivial shrinking solitons. We also examine some examples of
B\"ohm type but with dimension above the range where his Einstein existence
results apply. In this case too we do not find any solitons. The results seem
to hint that the cohomogeneity one soliton equations in the shrinking case exhibit
a high degee of rigidity and instability.

Let us now turn to the analysis of the equations. We are interested in solutions
to equation (\ref{gradRS}). Following B\"ohm  \cite{B1}, we consider situations
where the space of invariant metrics on the hypersurface is $2$-dimensional.
The cohomogeneity one metric therefore involves two functions, and the soliton
equations, being a system of second-order equations in the metric and the soliton
potential, are a dynamical system in $\mathbb{R}^6$. As remarked in \S 1, the
conservation law (\ref{cons1}) may be viewed as a constraint and  plays a vital role.

More precisely, we can consider a principal orbit $G/K$ and take one of
the special orbits to be $G/H$, where
$$ \g = \kf \oplus \p_1 \oplus \p_2$$
with $\p_1, \p_2$ being inequivalent irreducible $K$-modules and $\h = \kf \oplus \p_1$.
As in \S 4, we denote by $d_i$ the dimension of $\p_i$. To ensure smoothness at the
special orbit we need $H/K$ to be a sphere $\mathbb{S}^{d_1}$.
Note also that $Q:=G/H$ is isotropy irreducible and hence Einstein;
we shall denote its Einstein constant by $C_{Q}>0$. Our cohomogeneity
one metric is now given, with respect to an invariant background
inner product $B$ on $\p_1 \oplus \p_2$, by
\[
dt^2 + g_t = dt^2 + f(t)^2 B|_{\p_1} + h(t)^2 B|_{\p_2}.
\]
Note that the fibration
$H/K \rightarrow G/K \rightarrow G/H$  now becomes a Riemannian submersion
with respect to $g_t$ on $G/K$ and the metrics given by
$f(t)^2 B|_{\p_1}$ and $h(t)^2 B|_{\p_2}$ on $H/K$ and $G/H$ respectively.

Examples of this situation include the case when $G/K$ is a product
of two isotropy irreducible spaces with one being a sphere. Another case, familiar
in the Einstein situation from the work of B\'erard Bergery \cite{BB}, is when $d_1=1$, so $G/K$
is a circle bundle over $G/H$.

As usual, the same equations arise in certain more general situations which are not
strictly of cohomogeneity one type. For example in the B\'erard Bergery situation,
the hypersurface can be a circle bundle over an arbitrary Fano K\"ahler-Einstein base,
not necessarily homogeneous.

The Ricci soliton equations (\ref{soleqN}) and (\ref{soleqS}) specialize to:
\begin{equation}
-d_{1}\frac{\ddot{f}}{f}-d_{2}\frac{\ddot{h}}{h}+\ddot{u} +\frac{\epsilon}{2} = 0,
\end{equation}
\begin{equation} \label{feqn}
-\frac{\ddot{f}}{f}+(1-d_{1})\frac{\dot{f}^{2}}{f^{2}}-d_{2}\frac{\dot{f}\dot{h}}{fh}
     +\frac{d_{1}-1}{f^{2}} +\frac{d_{2}}{d_{1}}\|\mathcal{A}\|^{2}\frac{f^{2}}{h^{4}}
       + \frac{\dot{u}\dot{f}}{f}+\frac{\epsilon}{2} = 0,
\end{equation}
\begin{equation} \label{heqn}
-\frac{\ddot{h}}{h} +(1-d_{2})\frac{\dot{h}^{2}}{h^{2}}-d_{1}\frac{\dot{f}\dot{h}}{fh}
      +\frac{C_{Q}}{h^{2}} -2\|\mathcal{A}\|^{2}\frac{f^{2}}{h^{4}}
        +\frac{\dot{u}\dot{h}}{h} + \frac{\epsilon}{2} = 0,
\end{equation}
where the quantity $\|\mathcal{A}\|^{2}$ is the norm-squared of the O'Neill tensor
for the Riemanian submersion $H/K \rightarrow G/K \rightarrow G/H$ mentioned above.
It is a constant that depends only on the topology of the bundle.
When $G/K$ is a product of the two factors $H/K = \mathbb{S}^{d_1}$ and
$G/H$ then $\|\mathcal A \|^2$ is zero. We recall the conservation law (\ref{cons1}),
given in this setting by
\begin{equation}
\ddot{u}+\dot{u}\left(d_{1}\frac{\dot{f}}{f}+d_{2}\frac{\dot{h}}{h}\right)
    - \dot{u}^{2} - \epsilon u = C. \label{conseqn}
\end{equation}
Note that by changing the value of $u(0)$ we may fix the constant $C$ to be $0$,
and by a homothety of the metric we may alter the value of $\epsilon$.

To carry out our numerical study we shall consider the system (\ref{feqn})-(\ref{conseqn})
and introduce new variables $(z_1, z_2, z_3, z_4, z_5, z_6):=(f, \dot{f}, h, \dot{h}, u, \dot{u})$.
We then obtain the system

\begin{eqnarray}
   \dot{z_1} &=& z_2,  \\
   \dot{z_2} &=& -(d_1 -1)\frac{z_2^2}{z_1} -d_2 \frac{z_2 z_4}{z_3} + \frac{d_1-1}{z_1}
      + \frac{d_2}{d_1}\,\|\mathcal A\|^2 \frac{z_1^3}{z_3^4}
            + z_2 z_6 +\frac{\epsilon}{2}z_1,  \\
   \dot{z_3} &=& z_4, \\
   \dot{z_4} &=& -(d_2 -1)\frac{z_4^2}{z_3} -d_1\frac{z_2 z_4}{z_1} + \frac{C_Q}{z_3}
                   -2\,\|\mathcal A\|^2 \frac{z_1^2}{z_3^3} +z_4 z_6 + \frac{\epsilon}{2} z_3, \\
   \dot{z_5} &=& z_6,  \\
   \dot{z_6} &=& -z_6 \left(d_1 \frac{z_2}{z_1} + d_2 \frac{z_4}{z_3}\right) + z_6^2
              + \epsilon z_5.
\end{eqnarray}

The algorithm used to solve the above system numerically is the
Runge-Kutta method. A good account of it can be found in \cite{But},
and we include a brief discussion here for completeness. Given an ODE
$\dot{y}(t) = F(t,y)$ with initial condition $y(0)=y_{0}$, the Runge-Kutta
sequence of approximations is given by:
\begin{eqnarray*}
y_{n+1} &=& y_{n}+\frac{1}{6}({\sf k}_{1}+2{\sf k}_{2}+2{\sf k}_{3}+{\sf k}_{4}),\\
t_{n+1} &=& t_{n}+{\sf h},
\end{eqnarray*}
where
\begin{eqnarray*}
{\sf k}_{1} &=& F(t_{n},y_{n}),\\
{\sf k}_{2} &= & F(t_{n}+\frac{1}{2}{\sf h},y_{n}+\frac{1}{2}{\sf hk}_{1}),\\
{\sf k}_{3} &=& F(t_{n}+\frac{1}{2}{\sf h},\frac{1}{2}{\sf hk}_{2}), \\
{\sf k}_{4} &=&  F(t_{n}+{\sf h},{\sf hk}_{3}).
\end{eqnarray*}
The error at each step is $O(|{\sf h}|^{5})$ which gives an accumulated error of
$O(|{\sf h}|^{4})$.

The smoothness conditions at the singular orbit require that $(z_1,z_2,z_3,z_4,z_5,z_6)
=(0,1, \bar{h},0,\bar{u},0)$ where $\bar{h}$ is an arbitrary positive constant
and $\bar{u}$ is an arbitrary constant $\geq -(n+1)/2$ (by (\ref{Ebound})).
Note that this means that the initial Runge-Kutta step is not defined. We get around this
problem using the standard trick of constructing a power-series solution about a
small neighbourhood and then running the algorithm from this solution.

\subsection{Numerical results for $\mathbb{CP}^{2}\sharp \,\overline{\mathbb{CP}}^{2}$}

The metrics we consider here are $U(2)$-invariant and the principal orbits are Berger spheres
$S^3 = U(2)/U(1)$, so
we have $d_{1} = 1$ and $Q =\mathbb{S}^{2}$ with $C_{Q} = 1$. The constant
$\|\mathcal{A}\|^{2} =1$.
As remarked in the introduction, there is a non-trivial K\"ahler-Ricci soliton on this
manifold due to Koiso and Cao. There is also a Hermitian, non-K\"ahler, Einstein metric due to
Page \cite{Pa}. The Page metric is constructed explicitly in
our variables in \cite{Kod}, and in order to
check our algorithm we use the same normalisation and so take $\epsilon = -7.46562$.

In order to define a smooth metric on the blowup of $\mathbb{CP}^2$, we need the
$S^1$ fibre of the Berger sphere to collapse at both ends.
We are looking for a $t=t_{sol}>0$ such that
$$(f(t_{sol}),\dot{f}(t_{sol}),h(t_{sol}),\dot{h}(t_{sol}),u(t_{sol}),\dot{u}(t_{sol}))=(0,-1,\tilde{h},0,\tilde{u},0)$$
for some $\tilde{h} > 0$ and some $\tilde{u} > -2$. We output initial conditions where
$$SOL(t) = f^{2}(t)+(\dot{f}(t)+1)^{2}+\dot{h}^{2}(t)+\dot{u}^{2}(t)$$
satisfies $SOL<0.005$ for some $t > 0$. The plot below indicates the output of the
program. The parameters searched are $0 \leq \bar{h} \leq  3$, $-2 \leq \bar{u} \leq 2$
and the stepsize is $0.005$.

\begin{center}

\includegraphics{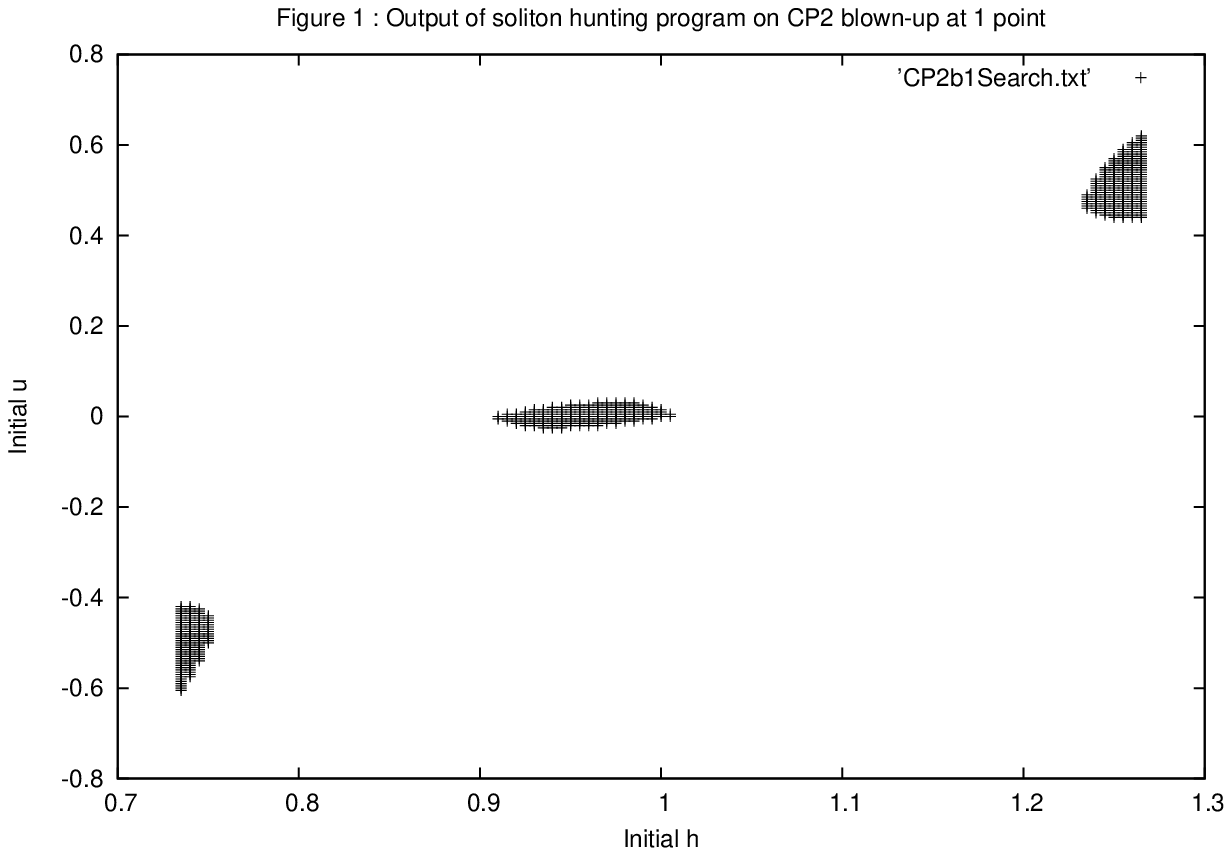}
\end{center}

The cluster about  $(\bar{h},\bar{u})=(0.7319,-0.5276)$ corresponds to the Koiso-Cao
soliton, the  cluster about $(\bar{h},\bar{u})=(0.9595,0)$ is the Page metric, and
the third cluster is the Koiso-Cao soliton with the conjugate complex structure.

We investigated for the soliton
the behaviour of the winding angle of \S 1 numerically, and
found that the angle decreases monotonically along the flow.

\subsection{Numerical results on $\mathbb{S}^{5}$}

Here $G = SO(3) \times SO(3)$ and the principal orbits are $\mathbb{S}^{2}\times \mathbb{S}^{2}$.
As we have mentioned before, B\"ohm \cite{B1} found infinitely many cohomogeneity
one Einstein metrics on this manifold. We choose the normalisation $\frac{\epsilon}{2}= -0.04$
so that the standard round metric has initial conditions $(h(0),u(0)) = (10,0).$

We need one of the $\mathbb{S}^2$ factors to collapse at one end, and the
other factor to collapse at the other end.
We are looking for a $t=t_{sol}>0$ such that
$$(f(t_{sol}),\dot{f}(t_{sol}),h(t_{sol}),\dot{h}(t_{sol}),u(t_{sol}),
   \dot{u}(t_{sol}))=(\tilde{f},0,0,-1,\tilde{u},0)$$
for some $\tilde{f} > 0$ and some $\tilde{u}$. We output initial conditions where
$$SOL(t) = \dot{f}^{2}(t)+h^{2}(t)+(\dot{h}(t)+1)^{2}+\dot{u}^{2}(t)$$
satisfies $SOL<0.005$ for some $t > 0$.

\begin{center}
\includegraphics{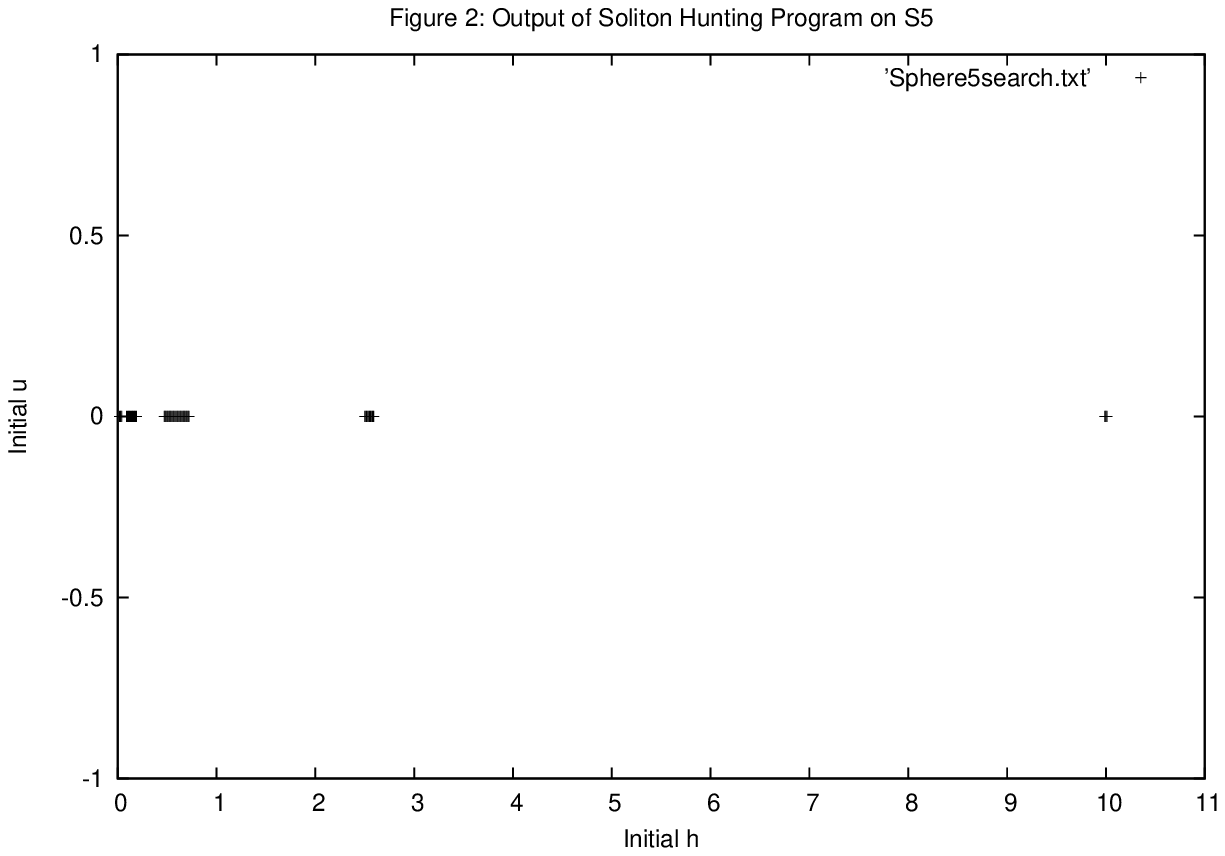}
\end{center}

The only values the program plotted are on the `Einstein axis', $\bar{u} = 0$.
We can see a cluster of points around $\bar{h} = 10$ which correspond to the
standard metric, a cluster around 2.5 which is the first B\"ohm metric ($\bar{h} \approx 2.53554 $),
then a final large cluster around 0 (of course there should be infinitely many
clusters between 2.5 and 0 but obviously our numerics cannot detect all these).
The authors are yet to find a cluster away from the Einstein axis.
It is interesting to restrict the algorithm to $\bar{u}=0$ and output the value of SOL
closest to $0$ for each $\bar{h}$. This plot gives us
\begin{center}
\includegraphics{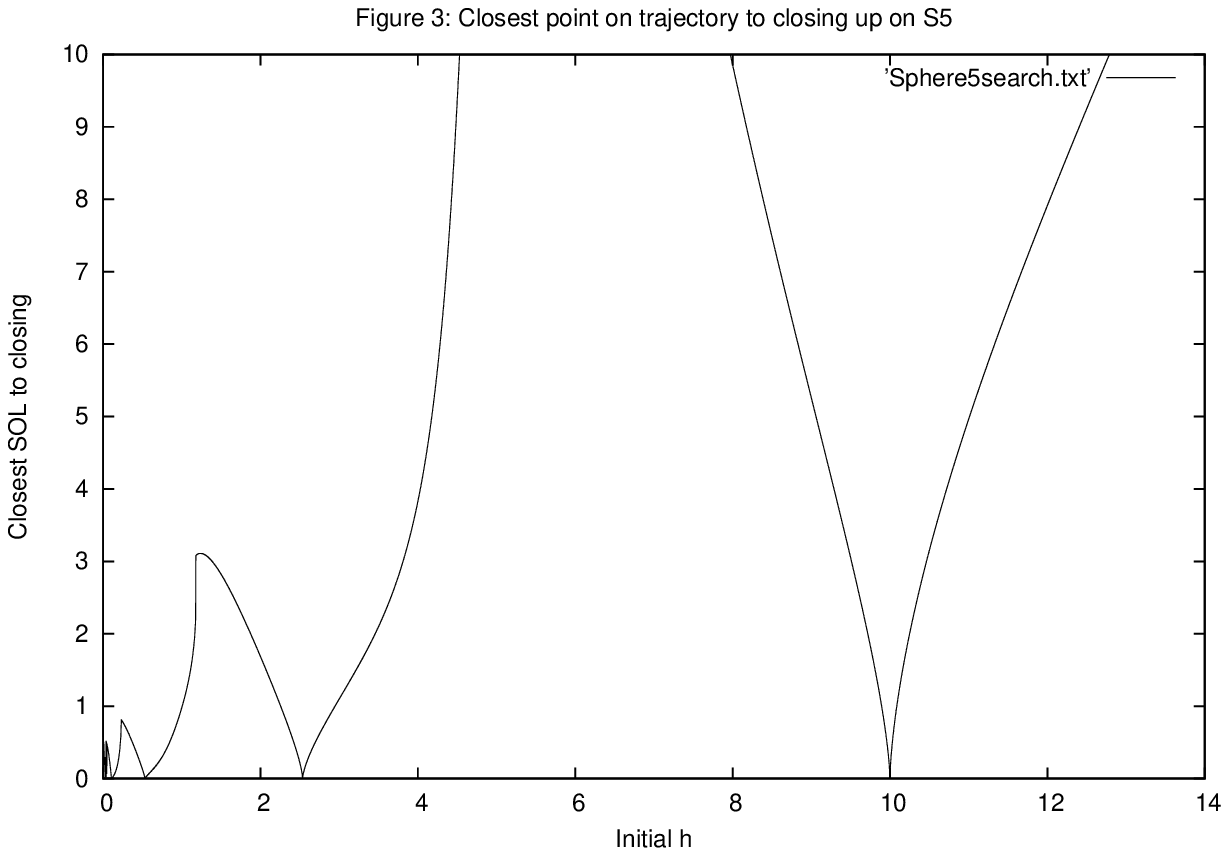}
\end{center}

We can clearly pick out the first three values of $\bar{h}$ corresponding to the
B\"ohm metrics $\bar{h} \approx 10, 2.5354, 0.53054$. We also see that the behaviour gets
increasingly complicated as $\bar{h}\rightarrow 0$.

\subsection{Numerical results on $\mathbb{S}^{2}\times\mathbb{S}^{3}$}

As in the $\mathbb{S}^{5}$ case, the group acting on the manifold is $G=SO(3)\times SO(3)$
and the principal orbits are $\mathbb{S}^{2}\times\mathbb{S}^{2}$.

We now need the same $\mathbb{S}^2$ factor to collapse at each end.
We are looking for a $t=t_{sol}>0$ such that
$$(f(t_{sol}),\dot{f}(t_{sol}),h(t_{sol}),\dot{h}(t_{sol}),u(t_{sol}),
    \dot{u}(t_{sol}))=(0,-1,\tilde{h},0,\tilde{u},0)$$
for $\tilde{h} > 0$ and some $\tilde{u}$.
As in the $\mathbb{S}^{5}$ case, we do not find any non-trivial solitons.
The hunting program does output clusters of points corresponding to the B\"ohm-Einstein
metrics on this manifold.  As it is more illuminating we output the smallest values of
SOL along some trajectories with $\bar{u}=0$.

\begin{center}
\includegraphics{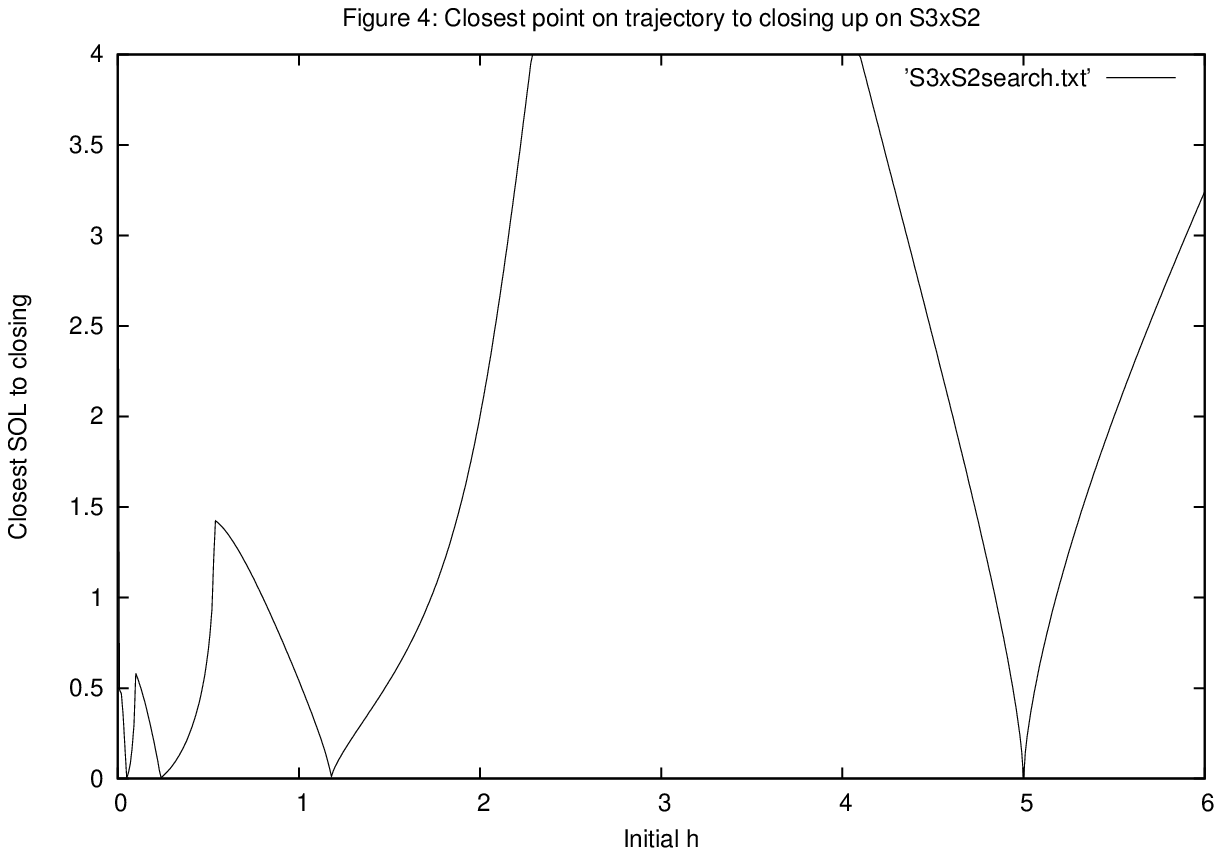}
\end{center}

So we see we can pick out the standard product metric on this space $\bar{h} = 5$ and
then the B\"ohm metrics corresponding to the initial values $\bar{h} \approx 1.1779$
and  $0.23571$.

\bigskip
As mentioned earlier, the analytical methods used by B\"ohm to construct
Einstein metrics only work in the dimension range $5 \leq \dim M \leq 9$.
It is not clear, however, for which orbit types this restriction is essential.

It is also conceivable that soliton solutions might exist on manifolds where
Einstein metrics do not. This is certainly the situation in the K\"ahler case.

We therefore examine next an example in higher dimensions, namely $\mathbb{S}^{11}$.

\subsection{Numerical results on $\mathbb{S}^{11}$}

In this example the group is $G=SO(6)\times SO(6)$ and the principal orbits
are $\mathbb{S}^{5}\times\mathbb{S}^{5}$.

The only cluster found by this search is one around $(10,0)$ corresponding to the
standard round metric on $\mathbb{S}^{11}$.

\subsection{Numerical results on $\HH\PP^{n+1} \sharp \,\,\overline{\HH\PP}^{n+1}$}

Another interesting case is that of the connected sum of two copies of
quaternionic projective space with opposite orientations.  In this case
the groups are $G = Sp(1) \times Sp(n+1)$, $K = \Delta Sp(1) \times Sp(n)$ and
$H = Sp(1) \times Sp(1) \times Sp(n)$. The principal orbits are $G/K  =\mathbb{S}^{4n+3}$,
and we have $d_{1}=3, d_{2}=4n$ and $C_{Q} = 4n+8$.  The special orbits
are $\HH\PP^{n}$, and the constant $\|\mathcal{A}\|^{2}=3$.

In the case $n=1$ B\"ohm \cite{B1} was able to prove the existence of a
cohomogeneity one Einstein metric on $\HH\PP^{2} \sharp \overline{\HH\PP}^{2}.$
He was also able to give numerical evidence for the existence of two cohomogeneity one
Einstein metrics in many cases where $n \geq 2$, although these examples are outside the
dimension range where his analytical methods work (some of these examples were also
found numerically by Page and Pope \cite{PP}).

 We also looked for solitons in these cases.
We ran the hunting program for a variety of values of $n$, using the initial values
$\bar{h} \in (0.1,10) $ and $\bar{u} \in (-\frac{4n+4}{2},\frac{4n+4}{2})$.
In order to increase accuracy, we used a step size of $0.001$ in the Runge-Kutta algorithm.

We could not find any non-trivial solitons.  When $n=1$ we found the B\"ohm metric
at $\bar{h} \approx 1.060793$. We include a plot of SOL below for this case. It demonstrates
a certain instability that all the Einstein metrics found on these manifolds shared,
namely that one needs to be close to the exact conditions in order to get the
trajectories to come close to closing up.

\begin{center}
\includegraphics{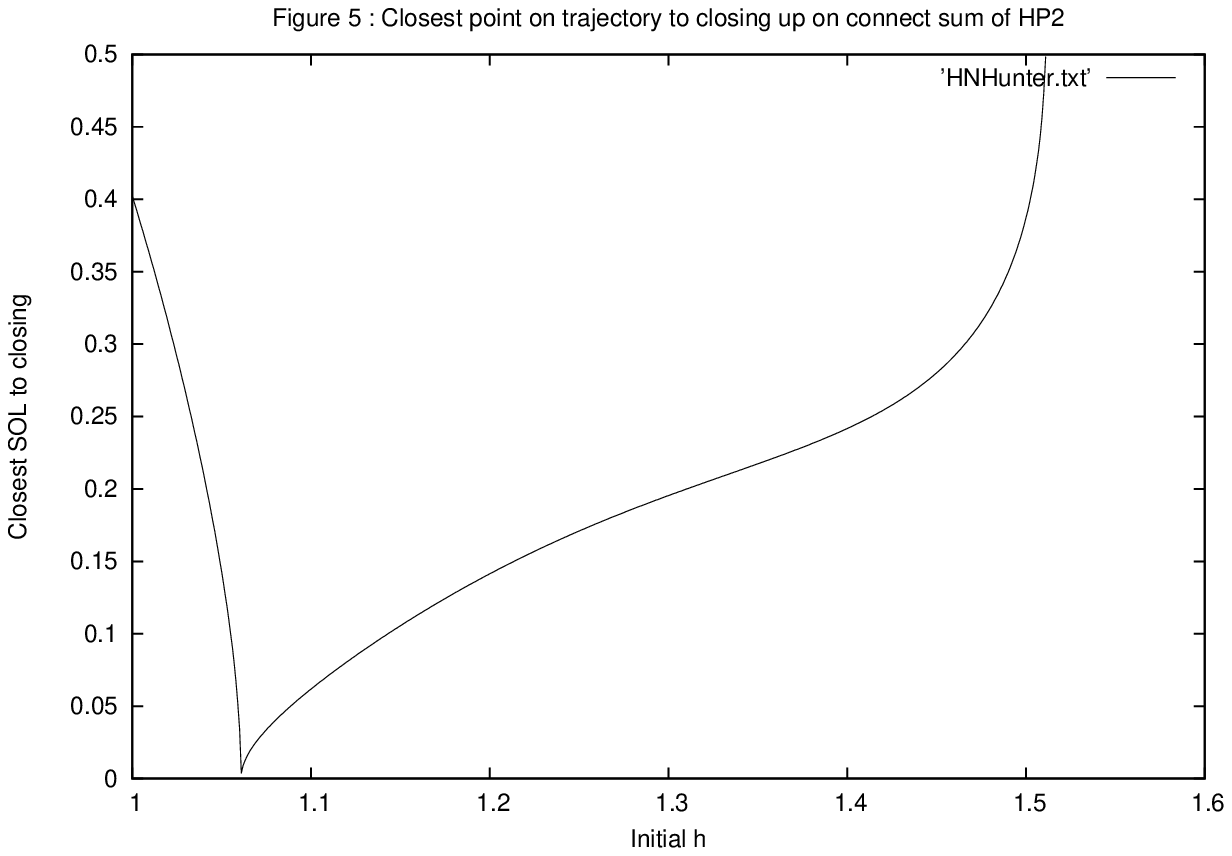}
\end{center}

We also include the same plot when $n=2$.  Notice we recover metrics that B\"ohm
found numerically with $\bar{h} \approx 1.0856062$ and $\bar{h}\approx 0.2184791$.

\begin{center}
\includegraphics{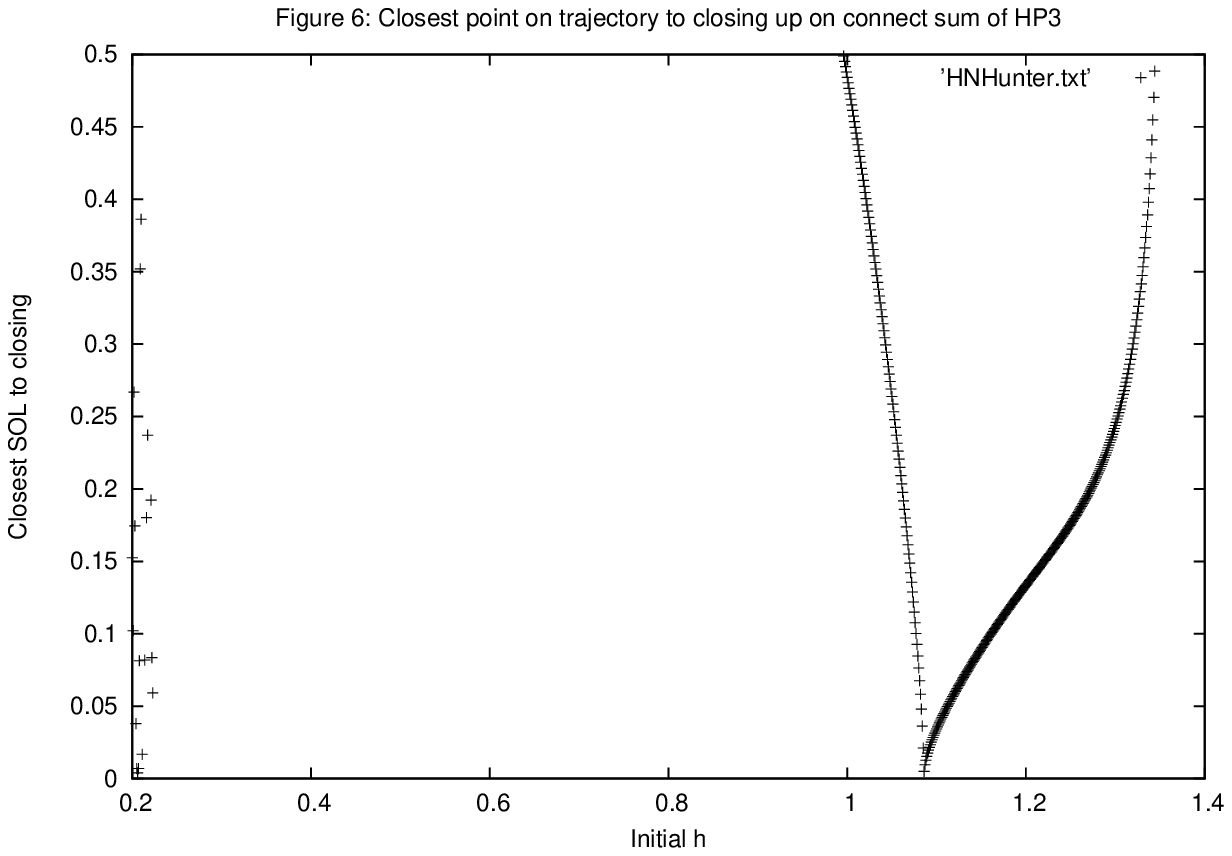}
\end{center}

\subsection{Numerical results for $F^{n+1}$ }

We also considered the space $F^{n+1}$ from \cite{B1}. For this space
$G = Sp(n+1), K=U(1) \times Sp(n)$ and $H = Sp(1) \times Sp(n)$, so the
principal orbit is $\mathbb{CP}^{2n+1}$, and the special orbits are $\HH \PP^{n}$.
The fibration $H/K \rightarrow G/K \rightarrow G/H$ is the twistor fibration for
$\HH \PP^{n}$, and we have $d_1=2$ and $d_2 = 4n$, as well as $C_{Q} =4n+8$
and $\|\mathcal{A}\|^{2}=8$.

Again we can recover Einstein metrics that B\"ohm and Gibbons-Page-Pope \cite{GPP}
found numerically, but we did not find any nontrivial solitons. We include below the results for $n=1$
where we can see there is evidence for an Einstein metric at $\bar{h} \approx 0.866$.
\begin{center}
\includegraphics{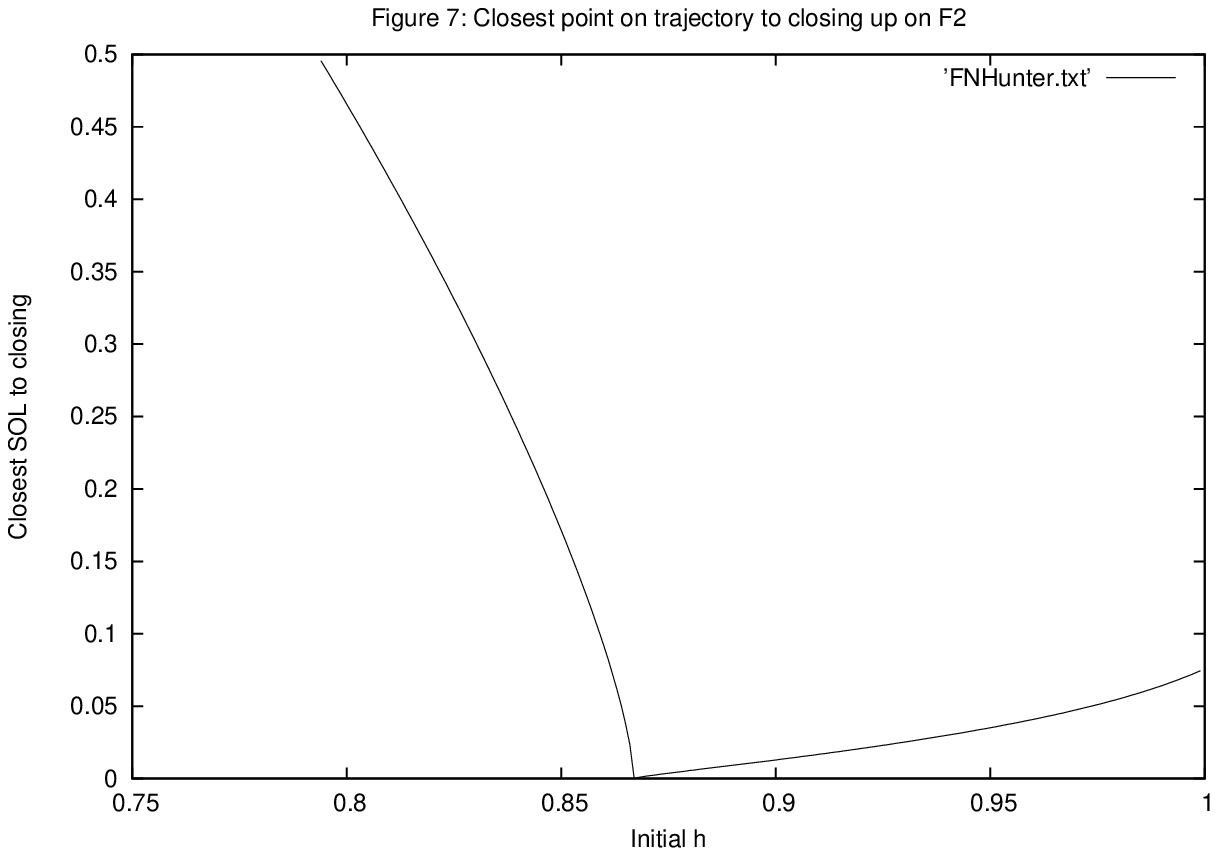}
\end{center}

\subsection{Numerical results on $Ca\PP^{2} \sharp \, \overline{Ca\PP}^{2}$}

Finally we also considered the connected sum of two copies of the Cayley
plane with opposite orientations. Now $G = Spin(9), K=Spin(7)$ and $H = Spin(8)$,
so the principal orbit is ${\mathbb S}^{15}$, and the special orbits are ${\mathbb S}^8$.
We have $d_1=7$ and $d_2 = 8$, with $Ric_{Q} = 28$ and $\|\mathcal{A}\|^{2}=7$.

We ran the soliton hunting program but did not find any evidence of Einstein
metrics or solitons.

\subsection{A Non-compact example - the Gaussian}
We conclude by discussing the smooth non compact Gaussian of Example
\ref{SG}.  We consider the case when the principal orbits are
$\mathbb{S}^{2}\times \mathbb{S}^{2}$. With our normalisations that
$C=0$ in equation (\ref{conseqn}) and $\frac{\epsilon}{2} = -4$, the
Gaussian soliton is described by
$$ dt^{2} +t^{2}dS_{2}^{2} + \frac{1}{2}dS_{2}^{2}$$
where $dS_{2}^{2}$ is the usual Einstein metric on $\mathbb{S}^{2}$
with Einstein constant 1. The soliton potential function is then given
by $u(t) = 2t^{2}-\frac{3}{2}.$ When we input the initial condition
$\bar{h} =0.5$ and $\bar{u} = -1.5$ and integrate we indeed recover
the Gaussian soliton for a short time.  However, as the numerical
errors accumulate along the trajectory the numerical solution deviates
from the Gaussian and very quickly becomes singular. Initial
conditions close to the Gaussian also display this behaviour.  This
suggests instability of the soliton, so that it might be difficult to
produce complete non-compact solutions close to the Gaussian. It should be noted
that there are some gap theorems for the Gaussian soliton on Euclidean space, due to
O. Munteanu-M.-T. Wang \cite{MuW} and Yokota \cite{Y}.

\end{document}